\theoremstyle{definition}
\newtheorem{theorem}{Theorem}[section]
\newtheorem{proposition}[theorem]{Proposition}
\newtheorem{corollary}[theorem]{Corollary}
\newtheorem{definition}[theorem]{Definition}
\newtheorem{lemma}[theorem]{Lemma}
\newtheorem*{remark}{Remark}
\numberwithin{equation}{section}
\tikzset{
  symbol/.style={
    draw=none,
    every to/.append style={
      edge node={node [sloped, allow upside down, auto=false]{$#1$}}}
  }
}
\newcommand{\function}[5]{%
  \begin{tikzcd}[
    column sep=2em,
    row sep=1ex,
    ampersand replacement=\&
  ]
  #1\colon \&[-3em]
  #2 \arrow[r] \&
  #3 \\
  \&
  #4 \arrow[u,symbol=\in] \arrow[r,mapsto] \&
  #5 \arrow[u,symbol=\in]
  \end{tikzcd}%
}
\newcommand*\diff{\mathop{}\!\mathrm{d}}
\DeclareMathOperator{\supp}{supp}
\title{\LARGE \textsc{Falconer's problem for dot \\ product on paraboloids}}
\author{{Chun-Kai Tseng}\thanks{This is my Bachelor's thesis at National Taiwan University under the supervision of Professor Chun-Yen Shen.}}
\date{}
\begin{document}

\maketitle

\begin{abstract}
We establish dimensional thresholds for dot product sets associated with compact subsets of translated paraboloids. Specifically, we prove that when the dimension of such a subset exceeds 
\( \frac{5}{4} = \frac{3}{2} - \frac{1}{4} \) in $\mathbb{R}^3$, and \( \frac{d}{2} - \frac{1}{4} - \frac{1}{8d - 4} \) in $\mathbb{R}^d$ for $d\geq 4$, its dot product set has positive Lebesgue measure.

This result demonstrates that if a compact set in \(\mathbb{R}^d\) exhibits a paraboloidal structure, then the usual dimensional barrier of \( \frac{d}{2} \) for dot product sets can be lowered for \( d \geq 3 \). Our work serves as the continuous counterpart of \cite{Discrete_product_sets}, which examines the finite field setting with partial reliance on the extension conjecture. 

The key idea, closely following \cite{Discrete_product_sets}, is to reformulate the dot product set on the paraboloid as a variant of a distance set. This reformulation allows us to leverage state-of-the-art results from the pinned distance problem, as established in \cite{Greatest_pinned_distance_for_d=2} for \( d = 2 \) and \cite{Greatest_pinned_distance_for_d>=3} for higher dimensions. Finally, we present explicit constructions and existence proofs that highlight the sharpness of our results.
\end{abstract}

\tableofcontents

\section{Introduction}

Throughout this paper, we use the notation $\mathcal{P}_d$ to denote the standard paraboloid in $\mathbb{R}^d$, that is
$$
\mathcal{P}_d \coloneqq \{ (\bar{x},|\bar{x}|^2) \colon \bar{x} \in \mathbb{R}^{d-1} \}.
$$
In contrast, when referring to the unit sphere, we write $\mathbb{S}^{d-1} \subset \mathbb{R}^d$. One should be careful with the exponents: in $\mathcal{P}_d$, the exponent $d$ refers to the dimension of the ambient space, while in $\mathbb{S}^{d-1}$, the exponent $(d-1)$ refers to the dimension of the submanifold.

\subsection{Dot product on the paraboloid in the finite field setting}

Motivated by the distance problem, many mathematicians have studied related variants such as simplex configurations, dot products, and other geometric structures. Furthermore, imposing structural constraints on compact sets has led to improved exponent thresholds that guarantee the existence of such configurations. These configuration problems have been explored not only in the Euclidean setting but also in the finite field setting. For instance, Che-Jui Chang, Ali Mohammadi, Thang Pham, and Chun-Yen Shen proved the following result in \cite{Discrete_product_sets}:

\begin{theorem}
Let $F$ be a field, and define the standard paraboloid in $\mathbb{F}^d$ by
$$
\mathcal{P}_d \coloneqq \{ (x_1,\dots,x_{d-1},x_1^2+\cdots+x_{d-1}^2) \colon x_1,\dots, x_{d-1} \in \mathbb{F} \}.
$$
\begin{enumerate}
    \item Let $E \subset \mathcal{P}_d \subset \mathbb{F}_q^{d}$, where $d,q \equiv 3 \pmod{4}$ and $q$ is a prime power. Assume the extension conjecture 
    $$
    \| (f d\sigma) \ \widecheck{} \ \|_{L^{\frac{2d+2}{d-1}}(\mathbb{F}_q^{d-1},dc)} \leq C \|f\|_{L^2(S_r,d\sigma)},
    $$
    holds for every
    $$
    S_r \coloneqq \{x=(x_1,\dots,x_{d-1}) \colon x_1^2+\cdots+x_{d-1}^2=r \} \subset \mathbb{F}_q^{d-1}
    $$
    where $r \neq 0$. Then $|\Pi(E)| \gg q$ whenever $|E| \gg q^{\frac{d}{2}-\frac{1}{2d}}$.
        
    \item Let $E \subset \mathcal{P}_3 \subset \mathbb{F}_p^{3}$, where $p \equiv 3 \pmod{4}$ and $p$ is a prime. Then $|\Pi(E)| \gg p$ whenever $|E| \gg p^{\frac{3}{2}-\frac{1}{4}}$.
    \end{enumerate}
\end{theorem}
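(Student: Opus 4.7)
The plan is to reduce the paraboloid dot-product problem to a distance problem on spheres one dimension lower, via the algebraic identity that for $x=(\bar x,|\bar x|^2)$ and $y=(\bar y,|\bar y|^2)$ on $\mathcal{P}_d$,
$$x\cdot y \;=\; \bar x\cdot\bar y \;+\; |\bar x|^2|\bar y|^2.$$
Restricting to pairs with $|\bar x|^2=|\bar y|^2=r$ turns this into $x\cdot y = r+r^2-\tfrac{1}{2}|\bar x-\bar y|^2$, an affine function of the squared distance. Writing $\bar E\subset\mathbb{F}_q^{d-1}$ for the projection of $E$ to the first $d-1$ coordinates and $\bar E_r\coloneqq\{\bar x\in\bar E : |\bar x|^2=r\}$ for its level slices, this gives $|\Pi(E)|\geq|\Delta(\bar E_r)|$ for every $r\in\mathbb{F}_q$, where $\Delta$ denotes the usual distance set.

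First, by a pigeonhole over $r\in\mathbb{F}_q$ some level $r_0$ satisfies $|\bar E_{r_0}|\gtrsim |E|/q$, giving $|\bar E_{r_0}|\gg q^{d/2-1/(2d)-1}$ in part~1 and $|\bar E_{r_0}|\gg p^{1/4}$ in part~2. The hypotheses $q,d\equiv 3\pmod 4$ imply $-1$ is a non-square in $\mathbb{F}_q$, so the zero-radius sphere is degenerate (few nontrivial isotropic vectors) and can be discarded, allowing one to take $r_0\neq 0$ so that $S_{r_0}$ is a genuine $(d-2)$-dimensional sphere variety on which the restriction/extension theory applies cleanly.

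Second, I would establish $|\Delta(\bar E_{r_0})|\gg q$ by an Iosevich-Rudnev-style Fourier argument on $S_{r_0}\subset\mathbb{F}_q^{d-1}$. Expanding the pair-count for a fixed distance $t$ in additive characters yields
$$\#\{(\bar x,\bar y)\in\bar E_{r_0}^2 : |\bar x-\bar y|^2=t\} \;\lesssim\; \frac{|\bar E_{r_0}|^2}{q} \;+\; \text{(Fourier error)},$$
where the Fourier error is bounded in part~1 by dualizing the assumed $L^2\to L^{(2d+2)/(d-1)}$ extension inequality for $S_{r_0}$, and in part~2 by the strongest known unconditional restriction/incidence input for circles in $\mathbb{F}_p^2$. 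A Cauchy--Schwarz summation of $t$ over $\Delta(\bar E_{r_0})$ then gives $|\Delta(\bar E_{r_0})|\gg q$ precisely when $|\bar E_{r_0}|$ exceeds the balance point between the main and error terms; a routine calculation confirms that this balance point matches the thresholds $q^{d/2-1/(2d)-1}$ and $p^{1/4}$ respectively.

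The main obstacle I anticipate is the bookkeeping in the Fourier step: one must verify that the extension exponent $(2d+2)/(d-1)$, after duality and interpolation against the trivial $L^\infty$ bound, yields exactly the threshold $d/2-1/(2d)$ and not a weaker one, and in part~2 one must assemble the correct unconditional planar input to reach the improved exponent $5/4$. A minor subtlety is checking that the affine map $t\mapsto r_0+r_0^2-\tfrac{1}{2}t$ is injective, so that distinct distances produce distinct dot products; this holds since $\operatorname{char}\mathbb{F}_q\neq 2$ when $q\equiv 3\pmod 4$. Self-consistency at $d=3$ (namely $3/2-1/6$ conditional versus $3/2-1/4$ unconditional) also serves as a useful sanity check on the computation.
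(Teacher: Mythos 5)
The algebraic identity you start from is correct, and the reduction of paraboloid dot products to distances on spheres in $\mathbb{F}_q^{d-1}$ is genuinely the first step of the argument in \cite{Discrete_product_sets}. But the pigeonhole to a single radius $r_0$ is a fatal loss, and the rest of the proposal does not go through.

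After pigeonholing you retain only $|\bar E_{r_0}| \gtrsim |E|/q$, which is $\gg q^{(d-2)/2 - 1/(2d)}$ in part~1 and $\gg p^{1/4}$ in part~2. In part~2 this is already impossible: a set of $p^{1/4}$ points on a circle in $\mathbb{F}_p^2$ determines at most $\binom{p^{1/4}}{2} \approx p^{1/2} \ll p$ pairs, hence at most $\approx p^{1/2}$ distances, so $|\Delta(\bar E_{r_0})| \gg p$ cannot hold. In part~1 the same mismatch appears once you compare against the known threshold for Falconer-type distance results on spheres in $\mathbb{F}_q^{d-1}$ (roughly $|A| \gg q^{(d-1)/2}$): you have $q^{(d-2)/2 - 1/(2d)}$, which is smaller by a factor $q^{1/2 + 1/(2d)}$. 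The ``routine calculation'' you defer to would not close this gap, because the pigeonhole has already discarded a full power of $q$ that the claimed threshold cannot afford. (Your $d=3$ ``self-consistency check'' is also misleading: part~2 gives the \emph{stronger} threshold $p^{5/4}$ versus the conditional $q^{4/3}$ from part~1, because part~2 uses additional unconditional planar tools rather than being a specialization of part~1.)

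The proof in \cite{Discrete_product_sets} avoids this loss by never restricting to a single sphere. Instead one bounds $|\Pi(E)|$ from below by Cauchy--Schwarz in terms of energy quantities, namely
$$
|\Pi(E)| \geq \frac{|E|^4}{|M(E)|}, \qquad |\Pi(E)| \geq \frac{|E|^3}{|D(E)|},
$$
with $M(E) = \{(x,y,z,w)\in E^4 : x\cdot y = z\cdot w\}$ and $D(E) = \{(x,y,z)\in E^3 : x\cdot y = x\cdot z\}$, and then estimates these energies via Fourier analysis and the sphere extension/restriction inequalities, summing over \emph{all} radii at once. That is where the identity $x\cdot y = |\bar x|^2\bigl|\bar y + \tfrac{\bar x}{2|\bar x|^2}\bigr|^2 - \tfrac{1}{4|\bar x|^2}$ (valid for $|\bar x|^2\neq 0$, and used in pinned rather than same-sphere form) and the hypothesis $d,q\equiv 3\pmod 4$ (ensuring the zero-radius sphere is small and the relevant spheres are nondegenerate) enter. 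To repair your proposal you would need to replace the single-sphere pigeonhole with such an energy decomposition. Finally, note that the present paper only \emph{cites} this theorem as motivation and summarizes the mechanism above; it does not supply a proof of it.
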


Notably, the extension conjecture has been proven in the two-dimensional case, which completes the proof for $d=3$ in the first part and fully establishes the second part.

In \cite{Discrete_product_sets}, the authors leveraged the geometry of the paraboloid to reduce the problem to a distance problem in a lower-dimensional vector space. However, key differences exist between the discrete and continuous settings. For instance, in the discrete case, they applied combinatorial counting arguments and the Cauchy-Schwarz inequality to obtain the estimates:
$$
|\Pi(E)| \geq \frac{|E|^4}{|M(E)|},
$$
and
$$
|\Pi(E)| \geq \frac{|E|^3}{|D(E)|},
$$
where 
$$
M(E) = \{(x,y,z,w) \in E^4 \colon x \cdot y = z \cdot w \}
$$
and
$$
D(E) = \{(x,y,z) \in E^3 \colon x \cdot y = x \cdot z \}.
$$
Although such estimates are crucial in the discrete case, they do not translate directly to the continuous setting.

\subsection{Dot product in Euclidean space}
As in the distance problem, the lowest possible threshold for ensuring that the dot product set has positive Lebesgue measure is $\frac{d}{2}$. The following proposition formalizes this statement by constructing a counterexample.
\begin{proposition} \label{Dot product counterexample}
Let $d\geq 2$. For every $\varepsilon>0$, there exists a compact set $E\subset \mathbb{R}^d$ such that $\dim_H E > \frac{d}{2}-\varepsilon$ and that $|\Pi(E)|=0$.
\end{proposition}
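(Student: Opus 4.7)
The plan is to construct $E$ as a Cantor-type well-approximable set --- the intersection of $r_k$-neighborhoods of increasingly fine rational lattices --- along the lines of Falconer's classical counterexample for the distance problem. Fix $s \in (d/2 - \varepsilon,\, d/2)$. Choose a very rapidly growing sequence of positive integers $(M_k)_{k \ge 1}$ with $M_k \mid M_{k+1}$, and set $r_k = M_k^{-d/s}$. I would define
$$
E_k \coloneqq \bigcup_{a \in \{0, 1, \dots, M_k\}^d} \overline{B}\!\left(\tfrac{a}{M_k},\, r_k\right),
$$
and let $E = \bigcap_{k \ge 1} E_k \subset [0, 1]^d$, which is compact. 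The key parameter choice is $r_k = M_k^{-d/s}$, which couples the lattice spacing $1/M_k$ and the approximation radius $r_k$ to the target Hausdorff dimension $s$.

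For the dot product bound, the observation is that any $x, y \in E_k$ admit decompositions $x = a/M_k + u$ and $y = b/M_k + v$ with $a, b \in \{0, \dots, M_k\}^d$ and $|u|, |v| \le r_k$. Expanding yields
$$
x \cdot y = \frac{a \cdot b}{M_k^2} + O(r_k),
$$
with an implicit constant depending only on $d$. Since $a \cdot b \in \mathbb{Z}$ with $|a \cdot b| \le d M_k^2$, the principal part $a \cdot b / M_k^2$ takes at most $2 d M_k^2 + 1$ values, each contributing an interval of length $O(r_k)$ to $\Pi(E_k)$. Thus $|\Pi(E_k)| \lesssim M_k^2 \cdot r_k = M_k^{2 - d/s}$, which tends to $0$ since $s < d/2$ makes the exponent negative. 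Because $E \subset E_k$ implies $\Pi(E) \subset \Pi(E_k)$, this forces $|\Pi(E)| = 0$.

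For the dimension bound $\dim_H E \ge s$, my plan is the mass distribution principle. I would put the uniform probability measure $\mu_k$ on $E_k$ (mass $(M_k+1)^{-d}$ spread uniformly over each of the $(M_k + 1)^d$ balls) and extract a weak-$*$ subsequential limit $\mu$, which is supported on $E$. A case analysis at the three natural scales --- $r < r_k$, $r_k \le r \le 1/M_k$, and $r \ge 1/M_k$ --- shows that, provided $(M_k)$ grows quickly enough at each step, $\mu$ satisfies a Frostman decay $\mu(B(x, r)) \lesssim r^{s}$ for all $r > 0$, yielding $\dim_H E \ge s > d/2 - \varepsilon$.

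The main technical obstacle is the dimension lower bound: calibrating the growth of $(M_k)$ so that the Frostman inequality holds uniformly across all scales, in particular in the awkward intermediate regime $r_k \le r \le 1/M_k$ where the geometry depends on how the balls of $E_{k+1}$ distribute within each ball of $E_k$. This calibration is standard but delicate; an alternative, cleaner route would be to cite a Jarn\'ik--Besicovitch-type dimension formula for the resulting uniformly well-approximable set, which delivers the correct dimension as soon as $(M_k)$ grows sufficiently fast (e.g.\ $M_{k+1} \ge M_k^k$).
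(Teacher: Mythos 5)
Your proposal is correct and follows essentially the same route as the paper: both build a compact well-approximable (Falconer/Jarn\'ik--Besicovitch) set as an intersection of $r_k$-neighborhoods of the rational lattices $\tfrac{1}{M_k}\mathbb{Z}^d$ (with $r_k=M_k^{-d/s}$ in your normalization and $q_k^{-1/s}$ with target dimension $sd$ in the paper's), observe that every dot product is within $O(r_k)$ of a rational of denominator $M_k^2$ so that $|\Pi(E_k)|\lesssim M_k^{2-d/s}\to 0$, and obtain the dimension lower bound from the standard Besicovitch-type result (the paper cites Falconer's book, Theorem 8.15, which is exactly the ``Jarn\'ik--Besicovitch-type dimension formula'' you defer to for the delicate mass-distribution calibration). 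The only cosmetic differences are the reparameterization of the exponent and your extra (harmless but unnecessary) assumption $M_k\mid M_{k+1}$.
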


\begin{proof}
Fix $s\in (0,1/2)$. Let $\{q_k\}_{k=1}^\infty$ be a sequence that satisfies $q_{k+1}>q_k^k$. For each $k\in \mathbb{N}$, define
$$
E_{s,q_k} \coloneqq 
\left\{ (x_1,\cdots,x_{d}) \in [0,1]^{d} \colon \exists n_i \in \mathbb{Z} \text{ such that } \left| x_i-\frac{n_i}{q_k}\right| \leq q_k^{-1/s},\, \forall 1\leq i \leq d \right\}
$$
and $E_s \coloneqq \bigcap_{k\in \mathbb{N}} E_{s,q_k}$. One can verify that $\dim_H E_s= sd$; see Theorem 8.15 in Falconer's book \cite{Falconer's_book} for more details. To prove the proposition, we show that for each $s\in (0,1/2)$, $|\Pi(E_s)|=0$.

Note that $\Pi(E_s) \subset \bigcap_{k\in \mathbb{N}} \Pi(E_{s,q_k})$. By direct calculation, for every two points $x=(x_1,\cdots,x_d), y=(y_1,\cdots,y_d) \in E_s$, their dot product is in a  $C_1 \, q_k^{-1/s}$-neighborhood of a lattice point of the form $\frac{n}{q_k^2}$ for all $k \in \mathbb{N}$ where $C_1$ is independent of $k$. Here, $n \in \mathbb{Z}$ lies in an admissible range since $E_s$ is bounded. This implies that $\Pi(E_s)$ is contained in a union of $\leq C_2\, q_k^2$ intervals of length $\leq C_1 \, q_k^{-1/s}$. Here, $C_2$ is also independent of $k$. Therefore, we can conclude that when $s<1/2$, $\Pi(E_s)$ has measure zero. Finally, for every $\varepsilon>0$, we can take $\frac{1}{2}-\frac{\varepsilon}{d}< s < \frac{1}{2}$ and construct the compact subset $E_s$ of $\mathbb{R}^d$ such that
$$
\dim_H E_s =sd > \frac{d}{2} - \varepsilon , \text{ but } |\Pi(E_s)|=0.
$$

\end{proof}

In \cite{Fourier_integral_operators}, Suresh Eswarathasan, Alex Iosevich, and Krystal Taylor proved that a Hausdorff dimension of $\frac{d+1}{2}$ is a sufficient threshold to ensure that the dot product has positive Lebesgue measure. More precisely, Corollary 1.5 in \cite{Fourier_integral_operators} implies the following:
\begin{theorem}\label{Dot product in general}
If $E$ is a compact subset of $\mathbb{R}^d$ with Hausdorff greater than $\frac{d+1}{2}$, then the dot product set $\Pi(E) \coloneqq \{x\cdot y \colon x,y\in E\}$ has positive Lebesgue measure.
\end{theorem}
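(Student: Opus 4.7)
My plan is to adapt Falconer's Fourier-analytic strategy from the distance set problem to the bilinear phase $\phi(x,y) = x \cdot y$. First, I fix any $s$ with $\frac{d+1}{2} < s < \dim_H E$ and pick a Frostman probability measure $\mu$ supported on $E$ of finite $s$-energy, equivalently (by Parseval) $\int |\hat{\mu}(\xi)|^2 |\xi|^{s-d}\, d\xi < \infty$. By intersecting $E$ with a small ball disjoint from the origin (and relabelling), I may also arrange $\supp(\mu) \subset \{y \in \mathbb{R}^d : |y| \geq c\}$ for some $c > 0$ without lowering the Hausdorff dimension. I then form the push-forward $\nu \coloneqq \Pi_*(\mu \otimes \mu)$, a probability measure concentrated on the compact set $\Pi(E) \subset \mathbb{R}$; it suffices to show that $\nu$ is absolutely continuous with an $L^2$ density, since this immediately forces $|\Pi(E)| > 0$.

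By Plancherel, this reduces to the bound $\int_{\mathbb{R}} |\hat{\nu}(t)|^2\, dt < \infty$. A direct computation yields
$$\hat{\nu}(t) = \iint e^{-2\pi i t (x \cdot y)}\, d\mu(x)\, d\mu(y) = \int \hat{\mu}(ty)\, d\mu(y),$$
and Cauchy--Schwarz in $y$ gives $|\hat{\nu}(t)|^2 \leq \int |\hat{\mu}(ty)|^2\, d\mu(y)$. Integrating in $t$ and changing variables $\xi = ty$ along each line $L_y = \mathbb{R} y$ reduces the problem to controlling an integral of $|\hat{\mu}|^2$ over lines through the origin, weighted by the directional distribution of $\mu$, by a constant multiple of the $s$-energy of $\mu$.

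I expect the main obstacle to lie precisely in this last estimate: a naive pushforward of $\mu$ onto $\mathbb{S}^{d-1}$ is too crude, since a Frostman measure can become highly singular after radial projection. The correct maneuver is to exploit that the mixed Hessian $\partial_x \partial_y (x \cdot y) = I_d$ of the dot product phase is everywhere non-degenerate, and to invoke the oscillatory / Fourier integral operator machinery of Eswarathasan--Iosevich--Taylor to decouple the $y$ and $\xi$ integrations. The threshold $\frac{d+1}{2}$ arises from the familiar balance between the radial Plancherel weight $r^{d-1}$ and the decay of Fourier transforms of surface measures in one fewer transverse dimension, matching Falconer's classical distance-set exponent. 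Once this estimate is in place, $\hat{\nu} \in L^2(\mathbb{R})$ and $\nu$ has an $L^2$ density; since $\nu$ is a probability measure supported on the compact set $\Pi(E)$, we conclude $|\Pi(E)| > 0$.
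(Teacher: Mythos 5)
Your reduction --- pushforward $\nu = \Pi_*(\mu\otimes\mu)$, the identity $\widehat{\nu}(t) = \int \widehat{\mu}(ty)\,d\mu(y)$, Cauchy--Schwarz to $\int\!\int |\widehat{\mu}(ty)|^2\,dt\,d\mu(y)$ --- is correct, and you rightly flag the obstacle: the change of variables $\xi = ty$ produces line integrals $\int_0^\infty |\widehat{\mu}(r\theta)|^2\,dr$ weighted by the angular distribution of $\mu$, which the $s$-energy does not directly control since a Frostman measure may concentrate in angle. But at that point you outsource the decisive estimate to ``the oscillatory/FIO machinery of Eswarathasan--Iosevich--Taylor'' without supplying it. That estimate \emph{is} the theorem; as a blind attempt, the proposal stops exactly where the work starts.

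The paper's own route (Theorem~\ref{projection theorem} together with the remark that one may always take $l_F=1$) closes this gap without invoking generalized Radon transform estimates. The key device is mollification: choose a smooth cutoff $\phi\equiv 1$ on $E$, so that $\widehat{\mu_E} = \widehat{\mu_E}*\widehat{\phi}$, and Cauchy--Schwarz gives $|\widehat{\mu_E}(ty)|^2 \lesssim \int |\widehat{\mu_E}(\xi)|^2\,|\widehat{\phi}(ty-\xi)|\,d\xi$. This replaces the exact line through $ty$ by a rapidly-decaying Schwartz average over a tube about it, which \emph{is} controlled by energy-type quantities. A five-region dyadic decomposition in $|\xi|$, $|ty-\xi|$, $|t|$, combined with the always-available tube bound $\mu_F(T_\delta)\lesssim\delta^{s-1}$ (i.e.\ $l_F=1$), then closes the integral; the condition $s_E+s_F-l_F>d$ with $s_E=s_F=s$ becomes precisely $s>\frac{d+1}{2}$. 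To complete your proposal you must either carry out this mollification-and-decomposition argument or actually prove the Sobolev mapping bound for the Radon-type averaging operator associated to $\phi(x,y)=x\cdot y$ that you gesture at.
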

\begin{remark}
In fact, the authors proved the result for any function $\phi \colon \mathbb{R}^d \times \mathbb{R}^d \to \mathbb{R}$ satisfying the Phong-Stein rotational curvature condition via generalized Radon transform. In particular, $\phi(x,y)=x\cdot y$ satisfies this condition except for the origin, which can be addressed using the pigeonhole principle.
\end{remark}

In \cite{Projection_theorems}, Burak Erdoğan, Derrick Hart, and Alex Iosevich proved the following theorem. Since this theorem plays a crucial role in this research,we will present a detailed proof based on Fourier analysis.
\begin{theorem}\label{projection theorem}
Let $E,F$ be compact subsets in $\mathbb{R}^d$ with Hausdorff dimension $s_E,s_F$. Suppose there exist Frostman measures $\mu_E ,\mu_F$ supported on $E,F$, and a nonnegative number $l_F$  such that for all sufficiently small $\delta>0$,
$$
\mu_F(T_\delta) \lesssim \delta^{s_F-l_F}
$$
for every tube $T_\delta$ of length $\approx 1$, radius $\approx \delta$ that emanates from the origin, and that $s_E+s_F-l_F>d$. Then for $\mu_F$-a.e. $y\in F$, 
$$
\mathcal{L}^1(\Pi^y(E)) = \mathcal{L}^1(\{x\cdot y \colon x\in E\})>0.
$$
\end{theorem}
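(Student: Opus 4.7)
The plan is to prove that $\nu_y := (\pi_y)_* \mu_E$, where $\pi_y(x) = x \cdot y$, has an $L^2(\mathbb{R})$-density for $\mu_F$-a.e.\ $y$; since $\Pi^y(E) \supseteq \supp(\nu_y)$, this immediately gives $\mathcal{L}^1(\Pi^y(E)) > 0$ on a set of full $\mu_F$-measure. By Plancherel, $\|\nu_y\|_{L^2}^2 = \int_{\mathbb{R}} |\hat{\mu}_E(ty)|^2\,dt$ since $\hat{\nu}_y(t) = \hat{\mu}_E(ty)$, so the task reduces to showing
$$
\int_F \int_{\mathbb{R}} |\hat{\mu}_E(ty)|^2\,dt\,d\mu_F(y) < \infty.
$$
Any tube $T_\delta$ of radius $\delta$ through the origin contains $B(0,\delta)$, so the hypothesis forces $\mu_F(B(0,\delta)) \lesssim \delta^{s_F - l_F}$. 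A dyadic splitting in $|y|$ then reduces matters to the case $|y| \approx 1$ on $\supp(\mu_F)$, and the piece $|t| \leq 1$ contributes at most $2\mu_F(F) < \infty$ via $|\hat{\mu}_E| \leq 1$. The core task is to control
$$
I_R := \int_F \int_{|t| \approx R} |\hat{\mu}_E(ty)|^2\,dt\,d\mu_F(y)
$$
summed over dyadic $R \geq 1$.

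For each $R$, I cover $\mathbb{S}^{d-1}$ by finitely overlapping caps $\{C_j\}$ of angular radius $\approx 1/R$ centered at directions $\omega_j$, and set $\mathcal{C}_j := \{y \in F : y/|y| \in C_j\}$. Since $F$ is bounded with $|y| \approx 1$, each $\mathcal{C}_j$ lies in a tube through the origin of radius $\approx 1/R$ and length $\approx 1$, so the hypothesis yields $\mu_F(\mathcal{C}_j) \lesssim R^{-(s_F - l_F)}$. For $y \in \mathcal{C}_j$ and $|t| \approx R$, the image $ty$ lives in a box $B_j$ around $R\omega_j$ of dimensions $\approx R$ (radial) by $\approx 1^{d-1}$ (transverse). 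Since $\mu_E$ is compactly supported, $\hat{\mu}_E$ is band-limited, so a Bernstein/Plancherel--Polya-type inequality controls $|\hat{\mu}_E|^2$ at a point by its $L^2$ average over a unit neighborhood. Integrating this along the ray through $\hat{y}$ yields the box-to-line comparison
$$
\int_{|s| \approx R} |\hat{\mu}_E(s\hat{y})|^2\,ds \lesssim \int_{B_j^+} |\hat{\mu}_E(\xi)|^2\,d\xi, \qquad \hat{y} \in C_j,
$$
where $B_j^+$ is a bounded enlargement of $B_j$.

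Applying the change $s = t|y| \approx t$, integrating over $y \in \mathcal{C}_j$, and summing over $j$ using bounded overlap of $\{B_j^+\}$ on the annulus $\{R \leq |\xi| \leq 2R\}$ gives
$$
I_R \lesssim \max_j \mu_F(\mathcal{C}_j) \cdot \int_{R \leq |\xi| \leq 2R} |\hat{\mu}_E(\xi)|^2\,d\xi \lesssim R^{-(s_F - l_F)} \cdot R^{d - s_E + \varepsilon},
$$
where the final factor comes from the standard energy identity $\int |\hat{\mu}_E(\xi)|^2 |\xi|^{(s_E - \varepsilon) - d}\,d\xi \lesssim I_{s_E - \varepsilon}(\mu_E) < \infty$, available by the Frostman property of $\mu_E$ for every $\varepsilon > 0$. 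Hence $I_R \lesssim R^{d - s_E - s_F + l_F + \varepsilon}$, and the dyadic geometric series converges for $\varepsilon > 0$ small precisely when $s_E + s_F - l_F > d$, which is the hypothesis.

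The hard part I expect is the box-to-line comparison: one must quantify that the one-dimensional line integral $\int |\hat{\mu}_E(s\hat{y})|^2\,ds$ is dominated (up to a constant) by the $d$-dimensional box integral $\int_{B_j^+} |\hat{\mu}_E|^2\,d\xi$, using the compact support of $\mu_E$ as a quantitative uncertainty principle. The remaining bookkeeping---ensuring bounded overlap of the enlarged boxes on the annulus and absorbing the $\varepsilon$-loss from passing between the Frostman condition and the strict Fourier energy bound---is routine because of the strict inequality $s_E + s_F - l_F > d$.
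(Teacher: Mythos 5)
Your proposal is correct and does reach the same two essential ingredients as the paper---the tube hypothesis for $\mu_F$ and the Frostman energy integral $\int |\widehat{\mu_E}(\xi)|^2 |\xi|^{\alpha-d}\,d\xi < \infty$ for $\alpha < s_E$---but the decomposition you choose is genuinely different from the paper's.

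The paper starts from the same identity $\widehat{\nu_y}(t) = \widehat{\mu_E}(ty)$, then immediately writes $\widehat{\mu_E} = \widehat{\mu_E} * \widehat{\phi}$ for a compactly supported cutoff $\phi \equiv 1$ on $E$, applies Cauchy--Schwarz to get $|\widehat{\mu_E}(ty)|^2 \lesssim \int |\widehat{\mu_E}(\xi)|^2 |\widehat{\phi}(ty-\xi)|\,d\xi$, and then splits the $(\xi, t, y)$-integral into five regions indexed by the relative sizes of $|\xi|$ and $|ty - \xi|$. In the critical region ($|ty - \xi|$ bounded), the paper freezes $\xi$ and observes that the set of $y$ with $|ty - \xi| \lesssim 1$ for some $t$ is a pair of tubes through the origin of radius $\approx |\xi|^{-1}$, feeding the tube hypothesis directly. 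You instead decompose dyadically in $|t| \approx R$, cover the sphere by caps of angular radius $1/R$, and for each cap compare a line integral of $|\widehat{\mu_E}|^2$ to a box integral over the associated $R \times 1^{d-1}$ box; the tube hypothesis enters via $\mu_F(\mathcal{C}_j) \lesssim R^{-(s_F - l_F)}$, and bounded overlap of the boxes collects everything into the annular energy $\int_{|\xi|\approx R} |\widehat{\mu_E}|^2$. The two pictures are dual (you fix the direction, the paper fixes $\xi$), but the bookkeeping is organized quite differently. Your ``box-to-line comparison'' is exactly where the paper's convolution-plus-Cauchy--Schwarz step lives: $|\widehat{\mu_E}(\xi_0)|^2 \lesssim \int |\widehat{\mu_E}(\xi)|^2 |\widehat{\phi}(\xi_0 - \xi)|\,d\xi$, integrated along the ray and with the rapid decay of $\widehat{\phi}$ absorbing the tails (the paper spends Regions 3, 4, 5 doing exactly this absorption explicitly). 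Your approach is closer to the classical Mattila-style cap decomposition and makes the geometry more visible; the paper's region-by-region split is more explicit about the Schwartz tails and needs no separate ``box-to-line'' lemma. Both are valid, and the strict inequality $s_E + s_F - l_F > d$ closes the dyadic sum in either organization.

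One small stylistic point: your statement ``$I_R \lesssim R^{d - s_E - s_F + l_F + \varepsilon}$, and the dyadic geometric series converges'' is fine, but it is cleaner (and avoids the $\varepsilon$) to note that $\sum_R R^{-(s_F-l_F)} \int_{|\xi|\approx R} |\widehat{\mu_E}|^2\,d\xi \approx \int |\widehat{\mu_E}(\xi)|^2 |\xi|^{-(s_F - l_F)}\,d\xi$, which is the energy integral with exponent $\alpha = d - (s_F - l_F) < s_E$, and hence finite directly by Frostman.
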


\begin{remark}
In general, the condition $\dim_H E = s_E$ does not necessarily guarantee the existence of an $s_E$-dimensional Frostman measure supported on $E$. An analogous issue arises for $F$ and $s_F$. More precisely, one requires that $\mathcal{H}^{s_E}(E) > 0$ and $\mathcal{H}^{s_F}(F) > 0$. Nevertheless, this distinction does not affect the dimensional threshold, since in such arguments we typically assume that the compact set under consideration has dimension \emph{strictly} greater than the threshold. This strict inequality provides the necessary flexibility for the analysis.
\end{remark}

\begin{proof}
First of all, by dyadic pigeonholing and rescaling, we may assume $F\subset \{ 
x\in \mathbb{R}^d \colon 1\leq |x| \leq 2 \} $. Let $\phi$ be a smooth cutoff function that is identically $1$ on $E$ and has compact support.

For each $y\in F$, define a measure $\nu_y$ on $\Pi^y(E)$ by the following:
$$
\int_{\mathbb{R}} g(s) \diff \nu_y(s) = \int_{\mathbb{R}^d} g(x\cdot y) \diff \mu_E(x).
$$
In other words, $\nu_y$ is the push-forward measure of $\mu_E$ under the map 
$$
\function{\Pi^y}{E}{\Pi^y(E)}{x}{x\cdot y}
$$
Observe that
$$
\begin{aligned}
\widehat{\nu_y}(t)
&= \int_{\mathbb{R}} e^{-2\pi i t s} \diff \nu_y(s) 
=\int_{\mathbb{R}^d} e^{-2\pi i t(x\cdot y)} \diff \mu_E(x)
=\int_{\mathbb{R}^d} e^{-2\pi i x\cdot (ty)} \diff \mu_E(x)
=\widehat{\mu_E}(ty)
\end{aligned}
$$

Below, we will show that
$$
\int_{\mathbb{R}^d} \int_{\mathbb{R}} |\widehat{\nu_y}(t)|^2 \diff t \diff \mu_F(y) < \infty.
$$
If this is true, then for $\mu_F$-a.e.$\ y\in F$, $\widehat{\nu_y} \in L^2$, which implies that $\nu_y \in L^2$. In other words, for $\mu_F$-a.e.$\ y\in F$, the  probability measure $\nu_y$ has an $L^2$-density. Remember that $\supp(\nu_y) \subset \Pi^y(E)$. Thus, for $\mu_F$-a.e.$\ y\in F$, $\mathcal{L}^1(\Pi^y(E))>0$, which is the statement of the theorem. Observe that we can assume the range of integration with respect to $t$ is $\{ t\colon |t|\geq 4 \}$. This is because $|\widehat{\nu_y}(t)|^2 \leq 1$, $\{ t\colon |t|\leq 4 \}$ has finite measure, and $\mu_F$ is a probability measure.

We now estimate the integral:
$$
\begin{aligned}
\int_{\mathbb{R}^d} \int_{|t|\geq 4} |\widehat{\nu_y}(t)|^2 \diff t \diff \mu_F(y)
&= \int_{\mathbb{R}^d} \int_{|t|\geq 4} |\widehat{\mu_E}(ty)|^2 \diff t \diff \mu_F(y)\\
&= \int_{\mathbb{R}^d} \int_{|t|\geq 4} |\widehat{\mu_E} * \widehat{\phi} (ty)|^2 \diff t \diff \mu_F(y)\\
&= \int_{\mathbb{R}^d} \int_{|t|\geq 4} \left| \int_{\mathbb{R}^d} \widehat{\mu_E}(\xi) \widehat{\phi} (ty-\xi) \diff \xi \right|^2 \diff t \diff \mu_F(y)\\
&\lesssim \int_{\mathbb{R}^d} \int_{|t|\geq 4} \int_{\mathbb{R}^d} |\widehat{\mu_E}(\xi)|^2 |\widehat{\phi}(ty-\xi)| \diff \xi \diff t \diff \mu_F(y)
\end{aligned}
$$
where the last inequality follows from the Cauchy-Schwarz inequality and the bound $\int |\widehat{\phi}(ty-\xi)| \diff \xi \leq C$, where $C$ is a constant independent of $t$ and $y$. Given the form of the integrand, it is natural to decompose the integral based on the relative magnitudes of $|\xi|$ and $|ty-\xi|$. Moreover, since $\widehat{\phi}$ is a Schwartz function, it satisfies the decay estimate
$$
|\widehat{\phi}(\eta)| \leq C_N |\eta|^{-N}
$$
for any $N>0$. This decay property is crucial in our analysis, as we shall see shortly.
~\\

\noindent \textbf{\underline{Region 1: $|\xi| \leq 2$}}
~\\\\
In this case, $|ty-\xi| \geq |t|\cdot |y|-|\xi| \gtrsim |t|$. Using the estimates $|\widehat{\mu_E}(\xi)|\leq 1$ and $|\widehat{\phi}(ty-\xi)|\lesssim |ty-\xi|^{-N} \lesssim t^{-N}$, we obtain
$$
\begin{aligned}
\int_{\mathbb{R}^d} \int_{|t|\geq 4} \int_{|\xi|\leq 2} |\widehat{\mu_E}(\xi)|^2 |\widehat{\phi}(ty-\xi)| \diff \xi \diff t \diff \mu_F(y)
\lesssim \int_{\mathbb{R}^d} \int_{|t|\geq 4} \int_{|\xi|\leq 2} t^{-N} \diff \xi \diff t \diff \mu_F(y) <\infty
\end{aligned}
$$

~\\
\noindent \textbf{\underline{Region 2: $|\xi| \geq 2,\, |ty-\xi|\leq 1$}}
~\\\\
In this case, the only available estimate for \( \widehat{\phi}(ty-\xi) \) is \( |\widehat{\phi}(ty-\xi)| \lesssim 1 \). Observe that in this region, we have $|ty| \approx |\xi|$, which implies $|t| \approx |\xi|$ because we assume $F$ is contained in the annulus of outer radius $2$ and inner radius $1$. Hence, we can estimate the integral:
$$
\begin{aligned}
&\int_{|ty-\xi|\leq 1} \int_{|t|\geq 4} \int_{|\xi|\geq 2} |\widehat{\mu_E}(\xi)|^2 |\widehat{\phi}(ty-\xi)| \diff \xi \diff t \diff \mu_F(y)\\
&\lesssim \int_{|\xi|\geq 2} |\widehat{\mu_E}(\xi)|^2 (\mu_F \times \mathcal{L}^1) \{ (y,t)\colon |ty-\xi| \leq 1 \} \diff \xi
\end{aligned}
$$
Note that
$$
|ty-\xi| \leq 1 \; \Longrightarrow \; \left| y-\frac{|\xi|}{t}\frac{\xi}{|\xi|} \right| 
\leq \frac{1}{|t|} \lesssim \frac{1}{|\xi|}.
$$
Fixing $\xi$, we now analyze the possible range of $y$. Note that $\xi/|\xi|$ is the unit vector in the direction of $\xi$ and that $|\xi|/t$ is a scalar, possibly negative, $\approx 1$. Since \( t \) varies, \( y \) lies within a distance \( \lesssim |\xi|^{-1} \) of either the line segment joining \( c_1 \xi/|\xi| \) and \( c_2 \xi/|\xi| \) or the one joining \( -c_1 \xi/|\xi| \) and \( -c_2 \xi/|\xi| \), where \( c_1 < c_2 \) are the lower and upper bounds for the ratio \( |\xi|/|t| \). Therefore, the possible range of $y$ is a union of two tubes of radius $\approx |\xi|^{-1}$ and length $\approx 1$ with directions $\xi/|\xi|$ and $-\xi/|\xi|$. Now, we fix $\xi,y$, by the condition $|ty-\xi|\leq 1$, we see that the range of $t$ is contained in an interval of length $\lesssim 1$. Therefore,
$$
\begin{aligned}
&\int_{|\xi|\geq 2} |\widehat{\mu_E}(\xi)|^2 (\mu_F \times \mathcal{L}^1) \{ (y,t)\colon |ty-\xi| \leq 1 \} \diff \xi\\
&\lesssim \int_{|\xi|\geq 2} |\widehat{\mu_E}(\xi)|^2 
\left[ \mu_F(T_{|\xi|^{-1}}(\xi)) + \mu_F(T_{|\xi|^{-1}}(-\xi)) \right]
\diff \xi\\
&\lesssim \int_{|\xi|\geq 2} |\widehat{\mu_E}(\xi)|^2 |\xi|^{-s_F+l_F} \diff \xi<\infty
\end{aligned}
$$
provided that $s_F-l_F>d-s_E$, which is true by the assumption of the theorem.

~\\
\noindent \textbf{\underline{Region 3: $\forall m \in \mathbb{N}$, $2^m \leq |\xi| \leq 2^{m+1} $, and $\forall \l\in \mathbb{N}\cup \{0\} \colon \l\leq m-3$, $2^{\l} \leq |ty-\xi| \leq 2^{\l+1}$  }}
~\\\\
This region consists of many parts associated with different $m,\l$. We first fix $\l \in \mathbb{N}\cup \{0\}$ and integrate $\xi$ over the region $\{\xi \colon |\xi| \geq 2^{\l+3} \}$. Finally, we sum over all $\l \in \mathbb{N}\cup \{0\}$. 

Under the assumption of $m\geq \l+3$, we see that
$$
|ty-\xi| \leq 2^{\l+1} \leq \frac{1}{4} \cdot 2^m \leq \frac{1}{4} |\xi|.
$$
This implies that $|ty|\approx |\xi|$ and that $|t| \approx |\xi|$. Then we can estimate the integral of this region
$$
\begin{aligned}
\text{integral}
&= \sum_{\l \geq 0} \int_{|t|\geq 4} \int_{|\xi|\geq 2^{\l+3}} \int_{2^{\l} \leq |ty-\xi| \leq 2^{\l+1}} 
|\widehat{\mu_E}(\xi)|^2 |\widehat{\phi}(ty-\xi)| \diff \mu_F(y) \diff \xi \diff t \\
&\lesssim \sum_{\l \geq 0} \int_{|\xi|\geq 2^{\l+3}}  |\widehat{\mu_E}(\xi)|^2 \cdot 2^{-\l N} \cdot
(\mu_F \times \mathcal{L}^1) \{ (y,t)\colon |ty-\xi| \leq 2^{\l+1} \} \diff \xi\\
\end{aligned}
$$
By the same argument as before, we see that
$$
|ty-\xi| \leq 2^{\l+1} \Longrightarrow \left| y-\frac{|\xi|}{t} \frac{\xi}{|\xi|} \right| \lesssim 2^{\l+1} |\xi|^{-1}
$$
and that
$$
\begin{aligned}
(\mu_F \times \mathcal{L}^1) \{ (y,t)\colon |ty-\xi| \leq 2^{\l+1} \}
&\lesssim 2^{\l} \left[ \mu_F(T_{2^{\l+1}|\xi|^{-1}}(\xi)) +\mu_F(T_{2^{\l+1}|\xi|^{-1}}(-\xi)) \right]
\end{aligned}
$$
where the factor $2^{\l}$ is an upper bound of the length of the range of $t$ when both $\xi,y$ are fixed. Thus,
$$
\begin{aligned}
\text{integral}
&\lesssim \sum_{\l\geq 0} 2^{-\l N} \int_{|\xi|\geq 2^{\l+3}} |\widehat{\mu_E}(\xi)|^2 \cdot 2^{\l} \cdot
\left[ \mu_F(T_{2^{\l+1}|\xi|^{-1}}(\xi)) +\mu_F(T_{2^{\l+1}|\xi|^{-1}}(-\xi)) \right] \diff \xi\\
&\lesssim \sum_{\l\geq 0} 2^{-\l (N-1)} \int_{|\xi|\geq 2^{\l+3}} |\widehat{\mu_E}(\xi)|^2 \left( 2^{\l+1} \right)^{s_F-\l_F} |\xi|^{-s_F + \l_F} \diff \xi\\
&\lesssim \int_{|\xi|\geq 8} |\widehat{\mu_E}(\xi)|^2  |\xi|^{-s_F + \l_F} \diff \xi \cdot \left( \sum_{\l\geq 0} 2^{-\l(N-1+\l_F-s_F)} \right) < \infty
\end{aligned}
$$
provided that $N$ is large enough.

~\\
\noindent \textbf{\underline{Region 4: $\forall m \in \mathbb{N}$, $2^m \leq |\xi| \leq 2^{m+1} $, $|ty-\xi| \geq 2^{m-2}$, and $|t| \leq 2^m$}}
~\\\\

Observe that in this region, we have
$$
|\widehat{\phi}(ty-\xi)| \lesssim |ty-\xi|^{-N} \lesssim 2^{-mN}.
$$
Hence, the integral of this region can be estimated as below:
$$
\begin{aligned}
\text{integral}
&= \sum_{m\geq 1} \int_{4\leq |t|\leq 2^m} \int_{2^m \leq |\xi| \leq 2^{m+1}} \int_{|ty-\xi| \geq 2^{m-2}}
|\widehat{\mu_E}(\xi)|^2 |\widehat{\phi}(ty-\xi)|  \diff \mu_F(y) \diff \xi \diff t\\
&\lesssim \sum_{m\geq 1} \int_{4\leq |t|\leq 2^m} \int_{2^m \leq |\xi| \leq 2^{m+1}} \int_{\mathbb{R}^d} 
|\widehat{\mu_E}(\xi)|^2 \cdot 2^{-mN}  \diff \mu_F(y) \diff \xi \diff t\\
&= \sum_{m\geq 1} 2^{-mN} \int_{4\leq |t|\leq 2^m} \int_{2^m \leq |\xi| \leq 2^{m+1}} 
|\widehat{\mu_E}(\xi)|^2 \diff \xi \diff t\\
&\lesssim \sum_{m\geq 1} 2^{-mN} \cdot 2^m 
\int_{2^m \leq |\xi| \leq 2^{m+1}} |\widehat{\mu_E}(\xi)|^2 \diff \xi\\
&\lesssim \sum_{m\geq 1} 2^{-m(N-1)} 2^{md} <\infty
\end{aligned}
$$
provided that $N$ is chosen large enough.

~\\
\noindent \textbf{\underline{Region 5: $\forall m \in \mathbb{N}$, $2^m \leq |\xi| \leq 2^{m+1} $, $|ty-\xi| \geq 2^{m-2}$, and $|t| \geq 2^m$}}
~\\\\

Note that under the assumption, we have
$$
|t| \lesssim |ty-\xi| + |\xi| \lesssim |ty-\xi|
$$
because $|\xi|\leq 2^{m+1} = 8\cdot 2^{m-2} \leq 8 |ty-\xi|$. So,
$$
\begin{aligned}
\text{integral}
&= \sum_{m\geq 1} \int_{|t|\geq 2^m} \int_{2^m \leq |\xi| \leq 2^{m+1}} \int_{|ty-\xi| \geq 2^{m-2}}
|\widehat{\mu_E}(\xi)|^2 |\widehat{\phi}(ty-\xi)|  \diff \mu_F(y) \diff \xi \diff t\\
&\lesssim \sum_{m\geq 1} \int_{|t|\geq 2^m} \int_{2^m \leq |\xi| \leq 2^{m+1}} \int_{|ty-\xi| \geq 2^{m-2}}
|\widehat{\mu_E}(\xi)|^2 \cdot \left[ |\widehat{\phi}(ty-\xi)| \cdot |ty-\xi|^2 \right] \cdot |t|^{-2} \diff \mu_F(y) \diff \xi \diff t\\
&\lesssim \sum_{m\geq 1} \int_{|t|\geq 2^m} \int_{2^m \leq |\xi| \leq 2^{m+1}} \int_{\mathbb{R}^d} |\widehat{\mu_E}(\xi)|^2 \cdot 2^{-mN} \diff \mu_F(y) |t|^{-2} \diff \xi \diff t\\
&= \sum_{m\geq 1} 2^{-mN} \int_{|t|\geq 2^m} \int_{2^m \leq |\xi| \leq 2^{m+1}}  |\widehat{\mu_E}(\xi)|^2 \cdot |t|^{-2} \diff \xi \diff t\\
&\lesssim \sum_{m\geq 1} 2^{-mN} \left[ \int_{|t|\geq 2^m} |t|^{-2} \diff t  \right] \cdot
\left[ \int_{2^m \leq |\xi| \leq 2^{m+1}} |\widehat{\mu_E}(\xi)|^2 \diff \xi \right]\\
&\lesssim \sum_{m\geq 1} 2^{-m(N-d)} <\infty
\end{aligned}
$$
if $N$ is chosen properly.

\end{proof}

\begin{remark}
Note that we can always take $l_F=1$, which gives an alternative proof of Theorem \ref{Dot product in general}.
\end{remark}

Burak Erdoğan, Derrick Hart, and Alex Iosevich also established the following corollary in the same paper \cite{Projection_theorems}:
\begin{corollary}\label{Sphere dot product}
Let a compact set
$$
E\subset \mathbb{S}^{d-1} = 
\left\{ x = (x_1,\cdots,x_d) \in \mathbb{R}^d \colon \sqrt{x_1^2+x_2^2+\cdots+x_d^2}=1 \right\}
$$
be of Hausdorff dimension greater than $\frac{d}{2}$. Let $\mu_E$ be a Frostman measure on $E$. Then
$$
\mathcal{L}^1( \{ x\cdot y \colon x\in E \} )>0
$$
for $\mu_E-$a.e. $y\in E$. In particular, we have $|\Pi(E)|>0$.
\end{corollary}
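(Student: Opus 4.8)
The plan is to apply Theorem~\ref{projection theorem} with $F=E$ and $\mu_F=\mu_E$. Since $\dim_H E>\frac d2$, fix a number $s_E$ with $\frac d2<s_E<\dim_H E$ and let $\mu_E$ be an $s_E$-dimensional Frostman measure supported on $E$, which exists by Frostman's lemma because $\mathcal H^{s_E}(E)>0$; we may normalize it to a probability measure (and any given Frostman measure works equally well after shrinking $s_E$). Take $s_F=s_E$ and $\mu_F=\mu_E$. To invoke the theorem it then suffices to produce a nonnegative number $l_F$ such that $\mu_E(T_\delta)\lesssim \delta^{s_E-l_F}$ for every tube $T_\delta$ of length $\approx 1$ and radius $\approx\delta$ emanating from the origin, subject to $s_E+s_F-l_F>d$. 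I claim the optimal value $l_F=0$ is admissible here, and this is the only real point.

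The reason is a short piece of spherical geometry: a line through the origin is \emph{normal} to $\mathbb S^{d-1}$ at the point where it meets the sphere, so a $\delta$-tube around such a line cuts $\mathbb S^{d-1}$ in a cap of radius $\approx\delta$, not $\approx\sqrt\delta$. Concretely, let $\omega\in\mathbb S^{d-1}$ be the direction of $T_\delta$, so that $T_\delta\subset\{x\colon 0\le x\cdot\omega\lesssim 1,\ |x-(x\cdot\omega)\omega|\le C\delta\}$. If $x\in T_\delta\cap\mathbb S^{d-1}$, write $x=a\omega+v$ with $v\perp\omega$; then $1=|x|^2=a^2+|v|^2$ while $|v|\le C\delta$, hence $a=\sqrt{1-|v|^2}\in[\sqrt{1-C^2\delta^2},1]$ and therefore $|x-\omega|^2=(a-1)^2+|v|^2\lesssim\delta^2$. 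Thus $T_\delta\cap\mathbb S^{d-1}$ lies in a ball of radius $O(\delta)$ centered at $\omega$ (or, if one allows two-sided tubes through the origin, in the union of two such balls centered at $\pm\omega$). The Frostman bound for $\mu_E$ now gives $\mu_E(T_\delta)\lesssim\delta^{s_E}$, which is exactly the hypothesis of Theorem~\ref{projection theorem} with $l_F=0$.

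With $l_F=0$ the remaining condition $s_E+s_F-l_F>d$ reads $2s_E>d$, which holds since $s_E>\frac d2$. Theorem~\ref{projection theorem} then gives $\mathcal L^1(\Pi^y(E))>0$ for $\mu_E$-a.e.\ $y\in E$, which is the first assertion of the corollary. For the last sentence, fix any such $y$; then $\Pi^y(E)=\{x\cdot y\colon x\in E\}\subseteq\{x\cdot y'\colon x,y'\in E\}=\Pi(E)$, so $\mathcal L^1(\Pi(E))\ge\mathcal L^1(\Pi^y(E))>0$. No genuine obstacle is expected: all the analytic work is already packaged in Theorem~\ref{projection theorem}, and the only new input is the transversality observation above, which is what makes the best possible value $l_F=0$ available and thereby lowers the threshold to $\frac d2$.
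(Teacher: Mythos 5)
Your proposal is correct, and it is the same approach the paper relies on: apply Theorem~\ref{projection theorem} with $F=E$ and $l_F=0$, where the admissibility of $l_F=0$ comes from the geometric fact that a $\delta$-tube through the origin meets the origin-centered unit sphere transversally, so the intersection is an $O(\delta)$-cap (the paper packages this computation as Lemma~\ref{technical lemma} and reuses it in Section~3.2; your direct calculation for the unit sphere is a clean special case of it). One small point worth making explicit: the statement fixes a given Frostman measure $\mu_E$, which a priori has some exponent $s_E\le\dim_H E$; if $s_E\le d/2$ the argument gives nothing, so the intended reading is that one takes $\mu_E$ with exponent strictly greater than $d/2$ (which exists because $\dim_H E>d/2$), exactly as you do when you ``shrink $s_E$.''
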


\begin{remark}
Corollary 1.1.1 in \cite{Projection_theorems} originally concerned distances rather than dot products. However, the authors established the distance result through an analysis of the dot product. This follows from the observation that for two points $x, y$ on the sphere of radius $r$ centered at the origin,
$$
|x-y|^2 = |x|^2 + |y|^2 -2x\cdot y= 2r^2 -2x\cdot y.
$$
\end{remark}

\begin{remark}
The results in \cite{Projection_theorems} provide a stronger, pinned version: for admissible sets $E, F$ and measures $\mu_E, \mu_F$, it holds that for $\mu_F$-almost every $y\in F$, $|\Pi^y(E)|>0$. In other words, there exist many points $x \in E$ such that $|\Pi^x(E)|>0$, which is clearly a stronger result than merely proving $|\Pi(E)|>0$.
\end{remark}

\begin{remark}
Based on the results of \cite{Discrete_product_sets}, one may conjecture whether the $\frac{d}{2}$ exponent for dot products on paraboloids can be improved in the continuous setting. However, if we only apply Theorem \ref{projection theorem}, we can never lower the $\frac{d}{2}$ threshold, as $l$ is always nonnegative, which forces $2s>d$. Moreover, if the $\frac{d}{2}$ threshold could be surpassed, it would contrast with the case of dot product on the sphere $\mathbb{S}^{d-1}$, as shown in Corollary \ref{Sphere dot product}.
\end{remark}

\subsection{State-of-the-art pinned distance results}
In this subsection, we present the latest results on the pinned distance problem, which serve as essential components for our main result.

For $d=2$, the best known result is due to Larry Guth, Alex Iosevich, Yumeng Ou, and Hong Wang \cite{Greatest_pinned_distance_for_d=2}:

\begin{theorem} \label{pinned distance for d=2}
If $E\subset \mathbb{R}^2$ is a compact subset of dimension larger than $\frac{5}{4}$, then there exists a point $x\in E$ such that $\Delta^x(E) \coloneqq \{|x-y| \colon y\in E\}$ has positive Lebesgue measure.
\end{theorem}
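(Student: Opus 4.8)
The plan is to follow the strategy of Guth, Iosevich, Ou and Wang \cite{Greatest_pinned_distance_for_d=2}, which marries the Fourier-analytic (Mattila-integral) framework for pinned distances with $\ell^2$-decoupling for the circle and sharp radial-projection estimates. Fix a Frostman measure $\mu$ on $E$ of exponent $s$ with $5/4 < s \le 2$, so that the energy $I_s(\mu) = \iint |x-y|^{-s}\,d\mu(x)\,d\mu(y)$ is finite. For $x \in \mathbb{R}^2$ let $\mu^x$ be the push-forward of $\mu$ under $y \mapsto |x-y|$, a probability measure supported on $\Delta^x(E)$. Exactly as in the proof of Theorem \ref{projection theorem}, it suffices to find a set of positive $\mu$-measure of pins $x$ for which $\widehat{\mu^x} \in L^2(\mathbb{R})$: for such $x$ the measure $\mu^x$ has an $L^2$ density, hence $\mathcal{L}^1(\Delta^x(E)) > 0$. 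Thus the goal becomes
\[
\int \int_{\mathbb{R}} \big|\widehat{\mu^x}(t)\big|^2 \, dt \, d\nu(x) < \infty
\]
for a suitably chosen auxiliary measure $\nu$ supported on $E$ (allowing $\nu \neq \mu$ is one of the technical keys). Following Bochen Liu's reformulation, expanding $\widehat{\mu^x}(t)$ through the asymptotics of the circular measure and integrating in $t$ reduces the left-hand side, up to harmless lower-order terms, to a weighted Mattila-type integral
\[
\int_1^\infty \int_{S^1} \big|\widehat{\mu}(r\omega)\big|^2 \, \big|\widehat{\nu}(r\omega)\big| \, r \, d\omega \, dr ,
\]
so everything comes down to estimating, dyadically in $r \sim R$, a weighted spherical $L^2$-average of $\widehat{\mu}$.

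The core of the argument is a good-tube/bad-tube dichotomy at each frequency scale $R$. Restricting $\widehat{\mu}$ to the annulus $|\xi| \sim R$ and decoupling the circle $\{|\xi|=R\}$ into $\sim R^{1/2}$ arcs $\theta$ of length $\sim R^{-1/2}$ (the scale at which the circle is essentially flat), one reorganizes, via Plancherel, the mass of $\mu$ along a corresponding family of thin tubes $T_\theta$. In the \emph{good} regime the mass of $\mu$ is spread among many such tubes; there the Bourgain--Demeter $\ell^2$-decoupling theorem for the circle upgrades the trivial square-function bound and produces a genuine power gain in $R$ beyond the $\tfrac{d}{2}$-threshold estimate underlying Theorem \ref{projection theorem}. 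In the \emph{bad} regime $\mu$ concentrates on a few tubes that pass near a common point $x$; but this is precisely a statement about the radial projection of $\mu$ from $x$, and is controlled by Orponen's radial-projection theorem and its quantitative refinements, which bound both the dimension of that projection (from $\min(s,1)$) and, crucially, the size of the exceptional set of bad pins at each scale. Choosing the threshold that separates good from bad tubes so as to balance the decoupling gain against the radial-projection loss, the two contributions match exactly when $s = 5/4$; summing the resulting geometric series over dyadic $R$ yields the displayed finiteness for every $s > 5/4$, and a final averaging argument extracts an admissible pin $x$.

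The hard part---and the advance of \cite{Greatest_pinned_distance_for_d=2} over the earlier bounds of Wolff, Erdoğan, and Liu---is the bad-regime estimate: one needs a radial-projection bound that is not merely qualitative (valid for a.e.\ pin) but comes with enough quantitative decay, uniformly in the frequency parameter $R$, that the measure of the set of bad pins is summable once it is matched scale-by-scale against the decoupling gain. A second difficulty is the two-parameter bookkeeping (the frequency $R$ and the good/bad threshold) together with the care needed to propagate the \emph{pinned} conclusion---positive measure for an individual $x$, rather than merely $|\Delta(E)| > 0$---through the choice of $\nu$. Since this theorem is a deep external input, used here as a black box alongside its higher-dimensional analogue, the complete argument should ultimately be cited from \cite{Greatest_pinned_distance_for_d=2}; the sketch above only records the route.
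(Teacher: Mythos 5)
The paper gives no proof of this theorem: it is stated as a black-box citation of Guth--Iosevich--Ou--Wang \cite{Greatest_pinned_distance_for_d=2}, exactly as you conclude at the end of your sketch. Your high-level account of that external argument (Mattila/Liu integral reformulation, good-tube/bad-tube dichotomy, $\ell^2$-decoupling for the circle against Orponen-type radial projection estimates, with the balance occurring at $s=\tfrac{5}{4}$) is a faithful summary, so there is nothing to reconcile with the paper's treatment.
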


As for $d\geq 3$, the best known result is given by Xiumin Du, Yumeng Ou, Kevin Ren, and Ruixiang Zhang \cite{Greatest_pinned_distance_for_d>=3}:
\begin{theorem} \label{pinned distance for d>=3}
If $d\geq 3$ and $E\subset \mathbb{R}^d$ is a compact subset of dimension larger than $\frac{d}{2}+\frac{1}{4}-\frac{1}{8d+4}$, then there exists a point $x\in E$ such that $\Delta^x(E) $ has positive Lebesgue measure.
\end{theorem}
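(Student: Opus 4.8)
The statement is the Du--Ou--Ren--Zhang pinned distance theorem, so my plan is to recall how the pinned Falconer problem reduces to a Fourier-analytic $L^2$ estimate and then indicate the two modern ingredients --- Du--Zhang style decoupling and Orponen-type radial projection bounds --- that push the threshold down to $\frac d2 + \frac14 - \frac1{8d+4}$. \textbf{Reduction.} Fix $s$ with $\frac d2 + \frac14 - \frac1{8d+4} < s < \dim_H E$ and, as in the remarks around Theorem~\ref{projection theorem}, a Frostman measure $\mu$ supported on $E$ with $\mu(B(x,\rho)) \lesssim \rho^s$. For $x \in \mathbb{R}^d$ let $\nu_x$ be the push-forward of $\mu$ under $y \mapsto |x-y|$, built exactly as $\nu_y$ was built from the dot product in the proof of Theorem~\ref{projection theorem}; then $\supp \nu_x \subset \Delta^x(E)$ and Cauchy--Schwarz gives $\mathcal L^1(\Delta^x(E)) \geq \|\nu_x\|_{L^1}^2 / \|\nu_x\|_{L^2}^2$. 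Hence it suffices to produce a Borel set $X \subset E$ with $\mu(X) > 0$ and $\int_X \|\nu_x\|_{L^2(\diff t)}^2 \, \diff \mu(x) < \infty$: then $\nu_x \in L^2$ with $\|\nu_x\|_{L^1} = \mu(E) > 0$ for $\mu$-a.e.\ $x \in X$, which yields the desired pin (in fact a positive-measure set of pins). A $TT^*$/Plancherel computation of Mattila type reduces this integral, up to lower-order terms, to controlling an averaged weighted spherical-$L^2$ quantity built from $\int_{\mathbb S^{d-1}} |\widehat{\mu}(r\omega)|^2 \, \diff\omega$ together with its coupling to the pin variable.

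\textbf{The two regimes.} The spherical average is estimated by discretizing $\mu$ at scale $\delta = R^{-1}$ and studying the extension operator on the sphere. When $\mu$ is \emph{spread out} --- not concentrated in the $R^{-1/2}$-neighborhood of a proper affine subspace --- one runs the Du--Zhang scheme: a broad--narrow decomposition, $\ell^2$-decoupling for the sphere/paraboloid, and a weighted $L^2$ restriction estimate against a fractal weight of dimension $s$, yielding a bound of the form $R^{\alpha(d,s)}$ with $\alpha$ good enough precisely when $s > \frac d2 + \frac14 - \frac1{8d+4}$. When instead $\mu$ \emph{concentrates} near a lower-dimensional affine subspace through (or near) $x$, decoupling is lossy; there one uses instead that the radial projection $\pi_x\colon y \mapsto \frac{y-x}{|y-x|}$ pushes $\mu$ to a measure of large Hausdorff dimension on $\mathbb S^{d-1}$, by Orponen's radial projection theorem and its quantitative refinements, and this directly produces an $L^2$ bound for $\nu_x$. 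Choosing $X$ to be the positive-measure set of pins near which $\mu$ is not too concentrated on subspaces lets the radial-projection estimate cover exactly the pins where decoupling fails.

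\textbf{Gluing and the obstacle.} Finally one runs a dyadic pigeonhole over the scales $R = 2^j$ and over the possible concentration scales, at each stage splitting $\mu = \mu_{\mathrm{good}} + \mu_{\mathrm{bad}}$ and handling $\mu_{\mathrm{good}}$ by decoupling and $\mu_{\mathrm{bad}}$ by radial projection; balancing the two estimates and summing the resulting geometric series in $j$ gives $\int_X \|\nu_x\|_{L^2}^2 \, \diff\mu(x) < \infty$, and hence the theorem via the reduction above. I expect the hard part to be exactly this synthesis: establishing the sharp fractal-weighted $L^2$ decoupling/restriction estimate and, more delicately, setting up the induction on scales so that the low-dimensional concentrations are genuinely absorbed by the radial projection input without leaking back into the good part --- the precise numerology of $\frac1{8d+4}$ is what falls out of optimizing this trade-off, and making the exceptional set of pins provably null requires careful bookkeeping of the losses at each dyadic step.
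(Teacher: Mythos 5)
The paper does not prove this theorem: it is stated verbatim as the main result of Du, Ou, Ren, and Zhang \cite{Greatest_pinned_distance_for_d>=3} and invoked as a black box (only a short pigeonholing remark is given to upgrade it to the two-set version in Theorem \ref{state-of-the-art pinned distance we use}). So there is no in-paper argument to compare against; your proposal has to be measured against the original, rather long, source.

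As a roadmap your sketch is faithful in outline. The reduction to an $L^2$ bound on the pinned push-forwards $\nu_x$ via Cauchy--Schwarz, the Mattila-type spherical-average identity, the dichotomy between the ``spread out'' regime (handled by refined/$\ell^2$ decoupling for the sphere together with a weighted $L^2$ restriction estimate against a fractal measure) and the ``concentrated near a lower-dimensional plane'' regime (handled by a quantitative radial projection theorem in the spirit of Orponen and Orponen--Shmerkin--Wang), and the good/bad decomposition of $\mu$ with a dyadic pigeonhole over scales --- these are indeed the ingredients of the Du--Ou--Ren--Zhang argument, which itself builds on the Guth--Iosevich--Ou--Wang framework. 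But the submission is not a proof: every load-bearing step is named rather than carried out. You do not state, let alone prove, (i) the sharp fractal-weighted decoupling/restriction estimate whose exponent $\alpha(d,s)$ would beat the trivial bound for $s>\frac d2+\frac14-\frac1{8d+4}$, (ii) the precise radial projection input and why its gain dominates exactly on the set where (i) is lossy, or (iii) the induction on scales and bookkeeping that make the exceptional set of pins $\mu$-null after summing over $R=2^j$. The number $\frac1{8d+4}$ is the output of optimizing the trade-off between (i) and (ii), and you explicitly defer that optimization as ``the hard part.'' As written this is an accurate high-level summary of someone else's theorem, not a proof of it; to be a proof you would need either to carry out those three steps or to reduce the statement to precisely cited lemmas that contain them.
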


The above two results were derived using the decoupling method. Moreover, one can notice that at the beginning of the two papers, the authors considered two subsets $E_1,E_2$ of $E$ that remain large in dimension and are separated by a distance $\gtrsim 1$. Therefore, we obtain a stronger version of these pinned distance theorems:

\begin{theorem} \label{state-of-the-art pinned distance we use}
\begin{enumerate}
\item If $E_1, E_2$ are two compact subsets of $\mathbb{R}^2$ whose dimensions are both greater than $\frac{5}{4}$, then there exists $x\in E_2$ such that $\Delta^{x}(E_1)$ has positive Lebesgue measure.
\item If \( d\geq 3 \), and \( E_1, E_2 \) are two compact subsets of $\mathbb{R}^d$ whose dimensions are both greater than $\frac{d}{2}+\frac{1}{4}-\frac{1}{8d+4}$, then there exists $x\in E_2$ such that $\Delta^{x}(E_1)$ has positive Lebesgue measure.
\end{enumerate}
\end{theorem}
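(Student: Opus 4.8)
The plan is to revisit the proofs of Theorem~\ref{pinned distance for d=2} and Theorem~\ref{pinned distance for d>=3} and to isolate the portion of each argument that never uses that the two pieces involved are subsets of a common set. As noted above, both proofs open with the same reduction: starting from a single compact $E$ with $\dim_H E>\tau$ (where $\tau=\frac54$ if $d=2$ and $\tau=\frac d2+\frac14-\frac1{8d+4}$ if $d\ge3$), one passes, by pigeonholing and rescaling, to two compact subsets $A,B\subset E$ of dimension $>\tau$ with $\operatorname{dist}(A,B)\gtrsim1$, the separation being exactly what kills the degeneracy of $y\mapsto|x-y|$ at $x=y$. Every later step — the decoupling-based estimates carried out in those papers — uses only that $A$ and $B$ carry Frostman measures of dimension $>\tau$ and are separated by $\gtrsim1$. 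Consequently those papers in fact prove the following statement, which I shall call $(\star)$: \emph{if $A,B\subset\mathbb R^d$ are compact with $\operatorname{dist}(A,B)\ge1$ and $\dim_H A,\dim_H B>\tau$, then there exists $x\in B$ with $\mathcal L^1(\Delta^x(A))>0$.}

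It remains to deduce the theorem from $(\star)$ by removing the separation hypothesis. By the strict inequalities $\dim_H E_i>\tau$ there are Frostman measures $\mu_1,\mu_2$ on $E_1,E_2$ of a common dimension $s>\tau$ (cf.\ the remark after Theorem~\ref{projection theorem}); in particular they are non-atomic, so $(\mu_1\times\mu_2)\{(x,y):|x-y|\le\varepsilon\}=\int\mu_1(B(y,\varepsilon))\,d\mu_2(y)\to0$ as $\varepsilon\to0$, and hence $\{(x,y):|x-y|>\varepsilon\}$ has positive $\mu_1\times\mu_2$-measure for some $\varepsilon>0$. Covering $E_1\times E_2$ by finitely many products $Q\times Q'$ of cubes of sidelength $\ll\varepsilon$, we may pick one, $Q_1\times Q_2$, of positive $\mu_1\times\mu_2$-measure that meets $\{|x-y|>\varepsilon\}$; by the small sidelength, $Q_1\times Q_2\subset\{|x-y|>\varepsilon/2\}$, so $\operatorname{dist}(Q_1,Q_2)\ge\varepsilon/2$, while $\mu_i(Q_i)>0$ together with the Frostman bound for $\mu_i$ forces $\dim_H(E_i\cap Q_i)\ge s>\tau$. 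Dilating by $2/\varepsilon$ and translating, $(E_1\cap Q_1,\;E_2\cap Q_2)$ becomes an admissible pair for $(\star)$ (dilations preserve dimensions and scale pinned distances linearly, hence preserve positivity of $\mathcal L^1$); the resulting pin, pulled back, lies in $E_2\cap Q_2\subset E_2$ and gives $\mathcal L^1(\Delta^x(E_1))\ge\mathcal L^1(\Delta^x(E_1\cap Q_1))>0$.

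The genuine work is the first step: one must read \cite{Greatest_pinned_distance_for_d=2} and \cite{Greatest_pinned_distance_for_d>=3} carefully enough to confirm that nothing after their opening reduction quietly uses $A,B\subset E$, i.e.\ that $(\star)$ is really what is established there — this is plausible precisely because the decoupling estimates are localized statements about the two pieces and their separation, but it must be checked. The reduction in the second step is routine; the only mild point is that the pieces $E_i\cap Q_i$ inherit Frostman measures of dimension $>\tau$ — immediate here from the strict dimension inequality, as recorded in the remark following Theorem~\ref{projection theorem}.
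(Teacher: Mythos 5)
Your proposal is correct and relies on the same core observation as the paper: the underlying proofs in \cite{Greatest_pinned_distance_for_d=2} and \cite{Greatest_pinned_distance_for_d>=3} really establish the ``separated'' statement $(\star)$, and the general statement is obtained by reducing to that. Where you and the paper differ is in the reduction. The paper first case-splits on $\dim_H(E_1\cap E_2)$: if the intersection already has dimension above the threshold, the one-set theorem is applied directly to $E_1\cap E_2$; otherwise both $E_1\setminus E_2$ and $E_2\setminus E_1$ retain large dimension and the paper then invokes ``the pigeonhole principle'' to extract separated compact pieces $E_1'\subset E_1$, $E_2'\subset E_2$ of dimension above the threshold — a step it does not spell out (note also that $E_i\setminus E_j$ need not be compact, and disjointness alone does not give separation, so something genuinely needs to be done there). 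Your argument avoids the case split altogether: you take Frostman measures $\mu_i$ on $E_i$, observe that the diagonal strip $\{|x-y|\le\varepsilon\}$ has small $\mu_1\times\mu_2$-measure because $\mu_1(B(y,\varepsilon))\lesssim\varepsilon^s$, and then locate a pair of small cubes $Q_1\times Q_2$ of positive product measure contained in $\{|x-y|>\varepsilon/2\}$; restricting and normalizing gives Frostman measures of dimension $s>\tau$ on separated compact pieces $E_i\cap Q_i$, and rescaling feeds into $(\star)$. This is a complete and correct implementation of exactly the step the paper waves at, and it has the mild additional benefit of treating the $E_1\cap E_2$-large case uniformly rather than separately. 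The one caveat you already flag — that $(\star)$ must genuinely be what \cite{Greatest_pinned_distance_for_d=2} and \cite{Greatest_pinned_distance_for_d>=3} prove after their opening reduction — is shared equally by the paper's proof, so it is not a gap relative to the paper.
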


\begin{proof}
If \( E_1\cap E_2 \) has dimension greater than the threshold in Theorem \ref{pinned distance for d=2} or Theorem \ref{pinned distance for d>=3}, then we can apply the corresponding theorem directly. Otherwise, both \( E_1 \backslash E_2 \) and \( E_2 \backslash E_1 \) have dimension larger than the threshold. In this case, applying the pigeonhole principle, we obtain two compact subsets \( E_1', E_2' \) of \( E_1,E_2 \), respectively, that are separated and have sufficiently large dimensions. By applying the arguments in Theorem \ref{pinned distance for d=2} or Theorem \ref{pinned distance for d>=3} to \( E_1' \) and \( E_2' \), we conclude that there exists a point \( x\in E_2' \) such that \( \Delta^x(E_1') \) has positive Lebesgue measure, which establishes the desired result.
\end{proof}

\subsection{The main result}

The two theorems below are our main results:
\begin{theorem}[Case $d=3$]\label{main d=3}
For every $a \in \mathbb{R}^3$, let
$$
\mathcal{P}_3 \coloneqq \{ (\bar{x},|\bar{x}|^2) \colon \bar{x} \in \mathbb{R}^{2} \} \subset \mathbb{R}^3
$$
be the standard paraboloid, and consider its translation $a+\mathcal{P}_3$.  
If $E \subset a+\mathcal{P}_3$ is compact and
$$
\dim_H E > \tfrac{5}{4} = \tfrac{3}{2}-\tfrac{1}{4},
$$
then the dot product set $\Pi(E)$ has positive Lebesgue measure.
\end{theorem}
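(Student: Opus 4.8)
\section*{Proof proposal}

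The plan is to fix one point $y_0\in E$ and analyze the pinned dot product set $\Pi^{y_0}(E)=\{x\cdot y_0 : x\in E\}$, which I will show is an affine image of a pinned \emph{distance} set on the base plane $\mathbb{R}^2$. Write $a=(\bar a,a_3)\in\mathbb{R}^2\times\mathbb{R}$ and parametrize the translated paraboloid by $\Phi(\bar x):=(\bar a+\bar x,\,a_3+|\bar x|^2)$; set $\bar E:=\Phi^{-1}(E)\subset\mathbb{R}^2$. Since $\Phi$ is smooth with $1$-Lipschitz inverse $(\bar w,w_3)\mapsto\bar w-\bar a$, it is bi-Lipschitz on the bounded set $\bar E$, so $\bar E$ is compact with $\dim_H\bar E=\dim_H E>\tfrac54$. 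Expanding, $\Phi(\bar x)\cdot\Phi(\bar y)$ equals $(a_3+|\bar y|^2)|\bar x|^2+(\bar a+\bar y)\cdot\bar x$ plus a term independent of $\bar x$, so completing the square gives, whenever $a_3+|\bar y|^2\neq0$,
\[
\Phi(\bar x)\cdot\Phi(\bar y)=\bigl(a_3+|\bar y|^2\bigr)\,\bigl|\bar x-\kappa(\bar y)\bigr|^2+C(\bar y),\qquad
\kappa(\bar y):=-\frac{\bar a+\bar y}{2\bigl(a_3+|\bar y|^2\bigr)},
\]
with $C(\bar y)$ independent of $\bar x$. Hence $\Pi^{y_0}(E)$ is the image of the squared-distance set $\{|\bar x-\kappa(\bar y_0)|^2:\bar x\in\bar E\}$ under the affine map $t\mapsto(a_3+|\bar y_0|^2)t+C(\bar y_0)$, which has nonzero slope as soon as $a_3+|\bar y_0|^2\neq0$. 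Since $t\mapsto t^2$ is a diffeomorphism of $(0,\infty)$ and $\{0\}$ is Lebesgue-null, it suffices to find one $\bar y_0\in\bar E$ with $a_3+|\bar y_0|^2\neq0$ for which the pinned distance set $\Delta^{\kappa(\bar y_0)}(\bar E)$ has positive Lebesgue measure.

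To produce such a $\bar y_0$ I would apply the two-set pinned distance theorem, Theorem \ref{state-of-the-art pinned distance we use}(1), with $E_1=\bar E$ and with $E_2$ a compact set of admissible pins of the form $\kappa(\bar E')$. The map $\kappa$ is where the degeneracies live: it is undefined on $Z_1:=\{\bar y:a_3+|\bar y|^2=0\}$, and a short computation with the matrix determinant lemma shows its differential $D\kappa$ is singular precisely on $Z_2:=\{\bar y:|\bar y+\bar a|^2=|\bar a|^2+a_3\}$. Each of $Z_1,Z_2$ is empty, a point, or a circle, hence of Hausdorff dimension at most $1<\tfrac54$; so $\bar E\setminus(Z_1\cup Z_2)$ still has dimension $>\tfrac54$, and by countable stability of Hausdorff dimension applied to its exhaustion by the compact sets $\{\bar y\in\bar E:\operatorname{dist}(\bar y,Z_1\cup Z_2)\ge 1/n\}$, some such compact subset $\bar E'$ has $\dim_H\bar E'>\tfrac54$. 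On an open neighborhood of $\bar E'$ the map $\kappa$ is a smooth local diffeomorphism; covering $\bar E'$ by finitely many pieces on which $\kappa$ is bi-Lipschitz shows $E_2:=\kappa(\bar E')$ is compact with $\dim_H E_2=\dim_H\bar E'>\tfrac54$.

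Now Theorem \ref{state-of-the-art pinned distance we use}(1) applied to $E_1=\bar E$ and $E_2=\kappa(\bar E')$ yields a pin $\kappa(\bar y_0)\in E_2$ with $\bar y_0\in\bar E'$ — in particular $\bar y_0\notin Z_1$, i.e.\ $a_3+|\bar y_0|^2\neq0$ — such that $\Delta^{\kappa(\bar y_0)}(\bar E)$ has positive Lebesgue measure. Taking $y_0:=\Phi(\bar y_0)\in E$ and unwinding the first-paragraph reduction (squaring is bi-Lipschitz on $(0,\infty)$, and the relevant affine map has nonzero slope), $\Pi^{y_0}(E)$ — and hence $\Pi(E)\supseteq\Pi^{y_0}(E)$ — has positive Lebesgue measure, which is the assertion of Theorem \ref{main d=3}.

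The conceptual heart of the argument is the first-paragraph identity: pinning one factor on the paraboloid converts the dot product into a rescaled squared distance from a pin $\kappa(\bar y)$ that depends on $\bar y$ through an inversion-type map. This is also the source of the main obstacle — the pin that a distance theorem hands back must be realizable as $\kappa(\bar y_0)$ for an honest point $\bar y_0$ of $\bar E$, which is exactly why one needs the two-set strengthening Theorem \ref{state-of-the-art pinned distance we use} rather than the plain pinned distance result \ref{pinned distance for d=2}. The rest — isolating and discarding the two exceptional circles $Z_1,Z_2$, and tracking compactness and Hausdorff dimension along $E\leftrightarrow\bar E\leftrightarrow E_2$ — is routine bookkeeping, the only mildly delicate step being the verification that $D\kappa$ is nonsingular off a one-dimensional set.
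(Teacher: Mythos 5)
Your proposal is correct and matches the paper's own argument: the same completion-of-the-square identity (your $\kappa$ is the paper's transformation map $T_a$), the same removal of the singular circle $Z_1$ and the degenerate circle $Z_2$ via pigeonholing since both have dimension at most $1 < \tfrac54$, and the same application of the two-set pinned distance theorem in $\mathbb{R}^2$ with $E_1 = \bar E$ and $E_2 = \kappa(\bar E')$. The only cosmetic difference is that you keep $E_1$ as the full projection while restricting only the pin set $E_2$ to the reduced $\bar E'$, whereas the paper replaces $E$ wholesale by a subset avoiding the bad regions; both work equally well.
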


\begin{theorem}[Case $d\geq 4$]\label{main d>=4}
For every $a \in \mathbb{R}^d$ with $d \geq 4$, let
$$
\mathcal{P}_d \coloneqq \{ (\bar{x},|\bar{x}|^2) \colon \bar{x} \in \mathbb{R}^{d-1} \} \subset \mathbb{R}^d
$$
be the standard paraboloid, and consider its translation $a+\mathcal{P}_d$.  
If $E \subset a+\mathcal{P}_d$ is compact and
$$
\dim_H E > \tfrac{d}{2}-\tfrac{1}{4}-\tfrac{1}{8d-4},
$$
then the dot product set $\Pi(E)$ has positive Lebesgue measure.
\end{theorem}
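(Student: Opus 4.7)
The plan, closely following the finite-field approach of \cite{Discrete_product_sets}, is to rewrite the dot product on the translated paraboloid as a pinned squared distance in $\mathbb{R}^{d-1}$ and then invoke the pinned-distance theorem of \cite{Greatest_pinned_distance_for_d>=3} collected in Theorem \ref{state-of-the-art pinned distance we use}. Parametrize $a+\mathcal{P}_d$ by $\bar u\in\mathbb{R}^{d-1}$ via $\bar u\mapsto(\bar a+\bar u,\,a_d+|\bar u|^2)$, and let $\pi:E\to\mathbb{R}^{d-1}$ denote the inverse map; $\pi$ is bi-Lipschitz on compact subsets, so $\dim_H\pi(E)=\dim_H E$. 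Writing $\bar y:=\bar a+\bar v$ and $c_y:=a_d+|\bar v|^2$, a direct expansion yields
$$
x\cdot y \;=\; c_y\,|\bar u|^2+\bar y\cdot\bar u+\bigl(\bar a\cdot\bar y+a_d c_y\bigr),
$$
and completing the square when $c_y\neq 0$ gives the key identity
$$
x\cdot y \;=\; c_y\bigl|\bar u-p(y)\bigr|^2+C(y), \qquad p(y):=-\tfrac{\bar y}{2c_y}.
$$
Hence, for each $y\in E$ with $c_y\neq 0$, $\Pi^y(E)$ has positive Lebesgue measure whenever the pinned distance set $\Delta^{p(y)}(\pi(E))\subset\mathbb{R}^{d-1}$ does.

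The threshold in Theorem \ref{state-of-the-art pinned distance we use} applied in $\mathbb{R}^{d-1}$ (admissible since $d-1\geq 3$) is exactly
$$
\tfrac{d-1}{2}+\tfrac{1}{4}-\tfrac{1}{8(d-1)+4} \;=\; \tfrac{d}{2}-\tfrac{1}{4}-\tfrac{1}{8d-4},
$$
matching the hypothesis. The aim is to produce a compact subset $E'\subset E$ with $\dim_H E'=\dim_H E$, on which $c_y$ stays bounded away from $0$ and $p$ is bi-Lipschitz. Given such an $E'$, one has $\dim_H p(E')=\dim_H E$, and Theorem \ref{state-of-the-art pinned distance we use} applied to the pair $E_1:=\pi(E)$ and $E_2:=p(E')$ furnishes some $y_0\in E'$ for which $\Delta^{p(y_0)}(\pi(E))$ has positive Lebesgue measure; the identity above then gives $\Pi^{y_0}(E)\subset\Pi(E)$ of positive measure.

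The main obstacle is the existence of such an $E'$. A direct Jacobian computation shows that $\det\nabla p(\bar v)$ vanishes precisely on $\{|\bar y|^2=a_d+|\bar a|^2\}$, while $p$ itself is undefined on $\{c_y=0\}$; so the bad locus of $p$ is the union of two $(d-2)$-spheres in parameter space, $\mathcal{S}_1:=\{c_y=0\}$ and $\mathcal{S}_2:=\{|\bar y|^2=a_d+|\bar a|^2\}$ (either of which may be empty depending on $a$). For $d\geq 4$ the dimension $d-2$ of this bad locus exceeds our threshold $\tfrac{d}{2}-\tfrac{1}{4}-\tfrac{1}{8d-4}$, so dimension counting alone does not prevent $E$ from concentrating on $\mathcal{S}_1\cup\mathcal{S}_2$. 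The strategy for producing $E'$ is to decompose $E$ dyadically according to distance to $\mathcal{S}_1\cup\mathcal{S}_2$ and apply the pigeonhole principle to a Frostman measure of $E$: either some dyadic annular piece has Hausdorff dimension equal to $\dim_H E$ (giving the desired $E'$ on which $p$ is bi-Lipschitz), or else a positive mass of $E$ concentrates on $\mathcal{S}_1\cup\mathcal{S}_2$. In the latter concentration case, $E$ essentially lies on a lifted $(d-2)$-sphere in $\mathbb{R}^d$, where $x\cdot y$ reduces, after an affine correction accounting for $\bar a$, to the ambient dot product between points on a $(d-2)$-sphere inside $\mathbb{R}^{d-1}$; Corollary \ref{Sphere dot product} then closes the argument, since its threshold $(d-1)/2$ lies strictly below $\tfrac{d}{2}-\tfrac{1}{4}-\tfrac{1}{8d-4}$ for every $d\geq 2$. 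Executing this concentration case cleanly---especially justifying the affine reduction when $\bar a\neq 0$---is expected to be the main technical difficulty; all other steps follow directly from the algebraic identity above and the black-box pinned-distance theorem.
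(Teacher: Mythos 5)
Your outline correctly captures the "good part" of the argument — the key completing-the-square identity, the identification of the two bad loci $\mathcal{S}_1$ and $\mathcal{S}_2$, the pigeonholing of $E$ either away from or onto the bad locus, and the application of Theorem \ref{state-of-the-art pinned distance we use} in $\mathbb{R}^{d-1}$ to $\pi(E)$ and $p(E')$. All of that matches the paper. The gap is in the concentration case, and the fix you sketch would not work as stated.

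You claim that once $E$ concentrates on a lifted $(d-2)$-sphere, the dot product reduces "after an affine correction accounting for $\bar a$" to the setting of Corollary \ref{Sphere dot product}. But $\Pi$ is not translation invariant: for $\bar w_1,\bar w_2$ on an origin-centered sphere, $(\bar w_1+\bar a)\cdot(\bar w_2+\bar a)=\bar w_1\cdot\bar w_2+\bar a\cdot\bar w_1+\bar a\cdot\bar w_2+|\bar a|^2$, and the cross terms depend on $\bar w_1,\bar w_2$ separately, so there is no affine change of variables sending $\Pi$ on the translated sphere to $\Pi$ on the centered sphere. (On a centered sphere, $\Pi$ is equivalent to $\Delta$ because $\bar w_1\cdot\bar w_2=r^2-\tfrac12|\bar w_1-\bar w_2|^2$; this equivalence is precisely what breaks after a translation.) Indeed, the paper devotes Subsection 3.2 to proving the genuinely new fact that \emph{translated} spheres still yield the exponent $\tfrac{d-1}{2}$; it does not invoke Corollary \ref{Sphere dot product}, but instead applies Theorem \ref{projection theorem} with $l_F=0$, using Lemma \ref{technical lemma} to show that tubes through the origin meet the translated sphere in $\lesssim\delta$-length pieces, and requires a further case split (origin inside / on / outside the sphere) with a separate pigeonholing onto the tangent part. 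None of that is subsumed by an "affine correction."

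A second, independent issue is the degenerate locus $\mathcal{S}_2$. Its lift to the paraboloid is the set $D=\{(\bar y,\,a_d+|\bar y-\bar a|^2):|\bar y|^2=|\bar a|^2+a_d\}$, which lies on a \emph{tilted} hyperplane $H$ (not on $\{y_d=0\}$), so the $y_d$-coordinate varies and contributes nontrivially to $x\cdot y$. Calling this a "lifted $(d-2)$-sphere" and treating it like $\mathcal{S}_1$ skips a step. The paper handles it by first rotating by some $g\in\mathbb{O}(d)$ so that $gH$ becomes $\{z_d=\text{const}\}$ (this preserves dot products), and only then working in the first $d-1$ coordinates — at which point $gD$ is an ellipsoid, not a sphere, so one needs Lemma \ref{technical lemma} again (after a linear rescaling), not Corollary \ref{Sphere dot product}. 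Your proposal neither performs the rotation nor addresses the ellipsoidal shape, and you have flagged this yourself as the "main technical difficulty" — it is, and the route via Corollary \ref{Sphere dot product} plus an affine correction cannot close it.
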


\begin{remark}
The results for \( d=3 \) and \( d \geq 4 \) differ due to a distinction in the best-known pinned distance bounds: for \( d=2 \), the bound is \( \frac{5}{4} \), whereas for \( d \geq 3 \), it is given by \( \frac{d}{2}+\frac{1}{4}-\frac{1}{8d+4} \).
\end{remark}

\begin{remark}
In fact, the above results can be strengthened: for every compact subset $E$ of the translated paraboloid $a+\mathcal{P}_d$ whose dimension exceeds the thresholds given in the previous theorems, there exists a point $x\in E$ such that the pinned dot product set $\Pi^x(E)$ has positive Lebesgue measure.
\end{remark}

\section{The key observation and a few lemmas}
\subsection{The key observation}\label{key observation}

Let $a=(\bar{a},a_d) \in \mathbb{R}^d$ represent a translation vector where $\bar{a} \in \mathbb{R}^{d-1}, a_d \in \mathbb{R}$. Recall the definition of the standard paraboloid:
$$
\mathcal{P}_d =\{ (\bar{x},|\bar{x}|^2) \colon \bar{x}\in \mathbb{R}^{d-1} \}.
$$ 
We consider the dot product on the translated paraboloid $a+ \mathcal{P}_d$. By the definition of translation, we have
$$
a+\mathcal{P}_d =\{ (\bar{x}+\bar{a},|\bar{x}|^2+a_d) \colon \bar{x}\in \mathbb{R}^{d-1} \}.
$$

\begin{figure}[!ht]
    \centering
    \includegraphics[scale=0.5]{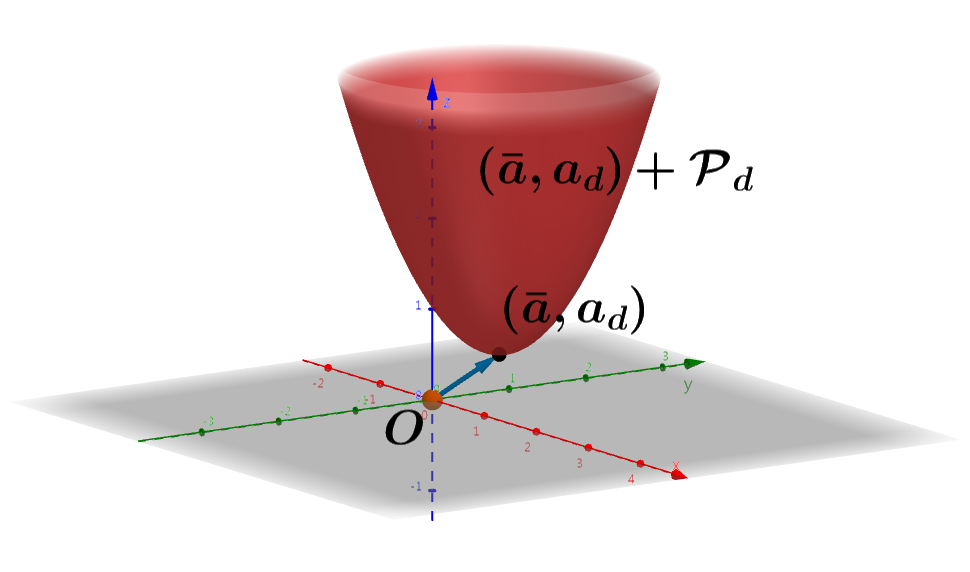}
    \caption{Translated paraboloid $a+\mathcal{P}_d$}
    \label{translated paraboloid}
\end{figure}
\FloatBarrier

For any two points $(\bar{x}+\bar{a},|\bar{x}|^2+a_d) , (\bar{y}+\bar{a},|\bar{y}|^2+a_d)$ on $ a+\mathcal{P}_d$, suppose their dot product is $t$. Then, we compute:
$$
\begin{aligned}
    t&= (\bar{x}+\bar{a},|\bar{x}|^2+a_d) \cdot (\bar{y}+\bar{a},|\bar{y}|^2+a_d)\\
    &= \bar{x}\cdot \bar{y} + \bar{a}\cdot \bar{x}
    +\bar{a}\cdot \bar{y}+ |\bar{a}|^2 + |\bar{x}|^2 |\bar{y}|^2 + a_d |\bar{x}|^2 +a_d |\bar{y}|^2 + a_d^2\\
    &= (|\bar{x}|^2 + a_d)|\bar{y}|^2 + \bar{y} \cdot (\bar{x}+\bar{a}) + (\bar{a}\cdot \bar{x}+|\bar{a}|^2 + a_d |\bar{x}|^2 + a_d^2) 
\end{aligned}
$$
Completing the square gives
\begin{equation}\label{dot product and distance}
\begin{aligned}
    t&= (|\bar{x}|^2 + a_d)|\bar{y}|^2 +2 \bar{y} \cdot \frac{\bar{x}+\bar{a}}{2} + (\bar{a}\cdot \bar{x}+|\bar{a}|^2 + a_d |\bar{x}|^2 + a_d^2)\\
    &= (|\bar{x}|^2+a_d) \left| \bar{y} + \frac{\bar{x}+\bar{a}}{2(|\bar{x}|^2+a_d)}  \right|^2 + h(a,\bar{x})\\
    &= (|\bar{x}|^2+a_d) \left| -\frac{\bar{x}+\bar{a}}{2(|\bar{x}|^2+a_d)} -\bar{y} \right|^2 + h(a,\bar{x})\\
\end{aligned}
\end{equation}
where
$$
h(a,\bar{x}) = (\bar{a}\cdot \bar{x}+|\bar{a}|^2 + a_d |\bar{x}|^2 + a_d^2) - \frac{|\bar{x}+\bar{a}|^2}{4(|\bar{x}|^2+a_d)}.
$$
Note that the above computation requires $|\bar{x}|^2+a_d \neq 0$.

Now, fix the translation vector $a=(\bar{a},a_d)$ and the translated paraboloid $a+\mathcal{P}_d$. For any $\bar{x} \in \mathbb{R}^{d-1}$ such that $|\bar{x}|^2+a_d \neq 0$, it determines both $-\frac{\bar{x}+\bar{a}}{2(|\bar{x}|^2+a_d)}$ and $h(a,\bar{x})$. If $E$ is a compact subset of $a+\mathcal{P}_d$, we obtain
\begin{equation}\label{positive measure iff}
\Pi^{(\bar{x},|\bar{x}|^2)+a} (E)
= (|\bar{x}|^2 + a_d ) (\Delta^2)^{-\frac{\bar{x}+\bar{a}}{2(|\bar{x}|^2+a_d)}} (\bar{E}) + h(a,\bar{x})
\end{equation}
where $(\bar{x},|\bar{x}|^2)+a$ is a point on $a+\mathcal{P}_d$, $\Delta^2$ refers to the configuration of distance square, and $\bar{E}=\{\bar{x} \colon (\bar{x}+\bar{a},|\bar{x}|^2+a_d) \in E\}$.

Remember that we have already fixed $a,\bar{x}$, so
$$
|\Pi^{(\bar{x},|\bar{x}|^2)+a} (E)|>0 \;\;\;\; \Longleftrightarrow \;\;\;\;  |\Delta^{-\frac{\bar{x}+\bar{a}}{2(|\bar{x}|^2+a_d)}} (\bar{E})|>0
$$
because the distance square set has positive Lebesgue measure if and only if the distance set does. To conclude, we have ``generically" transformed the dot product problem on a translated paraboloid into a pinned distance problem in one lower dimension.

\subsection{Transformation maps}
From \eqref{positive measure iff}, we observe the significance of the mapping $\bar{x} \longmapsto -\frac{\bar{x}+\bar{a}}{2(|\bar{x}|^2+a_d)}$. For convenience, we introduce the following definition.

\begin{definition}
For $a=(\bar{a},a_d) \in \mathbb{R}^d$, the transformation map associated with $a$, denoted as $T_a$, is defined by
$$
\function{T_a}{\mathbb{R}^{d-1}\backslash \{\bar{x} \colon |\bar{x}|^2+a_d =0 \}}{\mathbb{R}^{d-1}}{\bar{x}}{-\frac{\bar{x}+\bar{a}}{2(|\bar{x}|^2+a_d)}}
$$
\end{definition}

In order to apply the pinned distance results, we need to verify that transformation maps preserve the Hausdorff dimension of $\bar{E}$. We begin with the following lemma.

\begin{lemma}\label{not lose dimension}
$U$ and $V$ are open sets in $\mathbb{R}^d$. Suppose $f\colon U\subset \mathbb{R}^d \to V \subset \mathbb{R}^d$ is $C^1$ and its Jacobian does not vanish on its domain $U$. Then $\dim_H f(K)= \dim_H K$ for all compact sets $K$ in $U$.
\end{lemma}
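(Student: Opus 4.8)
\emph{Proof proposal.} The plan is to combine the inverse function theorem with two standard facts about Hausdorff dimension: (i) a Lipschitz map never increases Hausdorff dimension (see e.g.\ Falconer's book \cite{Falconer's_book}), and (ii) Hausdorff dimension is finitely stable, i.e.\ $\dim_H(A_1 \cup \cdots \cup A_n) = \max_i \dim_H A_i$. The statement is local in nature, so the whole argument is a reduction ``cover the compact set $K$ by finitely many good pieces''. A preliminary observation I will use repeatedly: if $g$ is $C^1$ on an open set $\Omega$ and $C \subset \Omega$ is compact, then $g$ is Lipschitz on $C$ — choose $\varepsilon>0$ with $C_\varepsilon := \{x : \operatorname{dist}(x,C)\le\varepsilon\}\subset\Omega$ compact, bound $\|Dg\|\le M$ on $C_\varepsilon$, and handle pairs $x,y\in C$ with $|x-y|\le\varepsilon$ via the mean value inequality along the segment $[x,y]\subset C_\varepsilon$ and pairs with $|x-y|>\varepsilon$ via $\operatorname{diam} g(C)$. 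In particular $f$ is Lipschitz on every compact subset of $U$, hence locally Lipschitz.

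For the inequality $\dim_H f(K)\le \dim_H K$: cover $K$ by finitely many open balls $B_1,\dots,B_n$ with each $\overline{B_i}\subset U$, so that $K=\bigcup_i (K\cap\overline{B_i})$. Each $K\cap\overline{B_i}$ is compact, $f$ is Lipschitz on it, so $\dim_H f(K\cap\overline{B_i})\le\dim_H(K\cap\overline{B_i})\le\dim_H K$, and finite stability gives $\dim_H f(K)\le\dim_H K$.

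For the reverse inequality I would use the non-vanishing of the Jacobian. By the inverse function theorem, every $p\in K$ has an open neighborhood $W_p\subset U$ on which $f$ restricts to a $C^1$ diffeomorphism onto the open set $f(W_p)$; shrinking, fix an open ball $B_p\ni p$ with $\overline{B_p}\subset W_p$. The local inverse $g_p := (f|_{W_p})^{-1}$ is $C^1$ on $f(W_p)$, hence Lipschitz on the compact set $f(\overline{B_p})\subset f(W_p)$ by the preliminary observation. Extract a finite subcover $B_{p_1},\dots,B_{p_n}$ of $K$ and set $K_i := K\cap\overline{B_{p_i}}$, so $K=\bigcup_i K_i$ with each $K_i$ compact and contained in $W_{p_i}$. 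Since $f$ is injective on $W_{p_i}\supset K_i$ we have $K_i = g_{p_i}\big(f(K_i)\big)$, and $g_{p_i}$ is Lipschitz on the compact set $f(K_i)$, so $\dim_H K_i \le \dim_H f(K_i)\le\dim_H f(K)$. Taking the maximum over $i$ and invoking finite stability once more yields $\dim_H K\le\dim_H f(K)$. Together with the previous paragraph this proves $\dim_H f(K)=\dim_H K$.

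The only genuinely delicate point is that $f$ is merely a \emph{local} diffeomorphism: it need not be injective on all of $U$, so there is no single global inverse to work with. Compactness of $K$ is precisely what compensates for this, letting us replace the missing global inverse by finitely many local ones; the remaining ingredients (the $C^1\Rightarrow$ locally Lipschitz estimate, non-increase of dimension under Lipschitz maps, finite stability of $\dim_H$) are entirely standard.
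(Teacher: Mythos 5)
Your proposal is correct and follows essentially the same route as the paper's: the Inverse Function Theorem yields local $C^1$-diffeomorphisms, compactness extracts a finite cover of $K$, and the conclusion follows from the non-increase of Hausdorff dimension under Lipschitz maps together with the finite stability of $\dim_H$. The paper phrases the endgame slightly differently---selecting (by finite stability) a single chart $\overline{B_{x_1}}$ with $\dim_H(K\cap\overline{B_{x_1}})=\dim_H K$ and applying dimension invariance of the diffeomorphism there---but this is equivalent to your step-by-step Lipschitz bound on each piece $K_i$ followed by a maximization.
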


\begin{proof}
Since $f$ is locally Lipschitz, we immediately obtain $\dim_H f(K)\leq \dim_H K$. The majority of our proof focuses on establishing the converse inequality. Since the Jacobian is nonzero, the Inverse Function Theorem ensures that for each $x\in K$, there exists an open ball $B_x$ centered at $x$ such that $B_x \subset 2B_x \subset U,\, f(2B_x) \subset V$ and that
$$
f|_{2B_x} \colon 2B_x \longrightarrow f(2B_x)
$$
is a $C^1$-diffeomorphism. Note that $\{ B_x \}_{x\in K}$ is an open cover of $K$ and that by compactness, there exists a finite cover $\{ B_{x_i} \}_{i=1}^N$ of $K$. Thus, there exists at least one chart whose closure intersects $K$ in a way that preserves the dimension of $K$. Without loss of generality, we assume $\dim_H (K\cap \overline{B_{x_1}})=\dim_H K$.

Observe that $K\cap \overline{B_{x_1}}$ is now in the domain of the $C^1$-diffeomorphism $f|_{2B_{x_1}}$. Since a $C^1$-diffeomorphism preserves dimension, we have
$$
\dim_H f(K) \geq \dim_H f(K\cap \overline{B_{x_1}}) = \dim_H (K\cap \overline{B_{x_1}}) = \dim_H K.
$$
\end{proof}

Thus, we proceed to compute the Jacobian determinant of the transformation maps.
\begin{proposition} \label{Jacobian of transformation maps}
For $a=(\bar{a},a_d) \in \mathbb{R}^d$, we have
$$
\operatorname{Jac} T_a = 2 \left( \frac{-1}{2(|\bar{x}|^2+a_d)} \right)^d
\left[ \sum_{i=1}^{d-1}(x_i+a_i)^2 -a_1^2 -\cdots -a_{d-1}^2 -a_d \right].
$$
\end{proposition}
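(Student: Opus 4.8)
The plan is to compute the $(d-1)\times(d-1)$ Jacobian matrix of $T_a$ entry by entry, recognize it as a scalar multiple of the identity plus a rank-one matrix, and then evaluate its determinant with the matrix determinant lemma. Throughout, abbreviate $\phi = \phi(\bar{x}) = |\bar{x}|^2 + a_d$, which is nonzero on the domain of $T_a$, so that the $i$-th component of $T_a$ is $(T_a)_i = -\frac{x_i + a_i}{2\phi}$ for $1\leq i \leq d-1$.

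First I would differentiate. Since $\partial_j \phi = 2x_j$, the quotient rule gives
$$
\frac{\partial (T_a)_i}{\partial x_j} = -\frac{\delta_{ij}}{2\phi} + \frac{(x_i + a_i)\,x_j}{\phi^2},
$$
so the Jacobian matrix is $J = \lambda I_{d-1} + \frac{1}{\phi^2}\,(\bar{x}+\bar{a})\,\bar{x}^{T}$ with $\lambda = -\frac{1}{2\phi}$, where $\bar{x}+\bar{a}$ and $\bar{x}$ are treated as column vectors. This is exactly the setting for a determinant-of-identity-plus-rank-one computation.

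Next I would apply the matrix determinant lemma $\det(\lambda I + u w^{T}) = \lambda^{d-1}\bigl(1 + \lambda^{-1} w^{T} u\bigr)$ with $u = \phi^{-2}(\bar{x}+\bar{a})$ and $w = \bar{x}$. Here $w^{T} u = \phi^{-2}\,\bar{x}\cdot(\bar{x}+\bar{a}) = \phi^{-2}\bigl(|\bar{x}|^2 + \bar{a}\cdot\bar{x}\bigr)$, and $\lambda^{-1}\phi^{-2} = -2/\phi$, so the bracketed factor collapses to
$$
1 - \frac{2}{\phi}\bigl(|\bar{x}|^2 + \bar{a}\cdot\bar{x}\bigr) = \frac{a_d - |\bar{x}|^2 - 2\bar{a}\cdot\bar{x}}{\phi}.
$$
Using $1/\phi = -2\lambda$ once more, this yields $\operatorname{Jac}T_a = \det J = -2\lambda^{d}\,(a_d - |\bar{x}|^2 - 2\bar{a}\cdot\bar{x}) = 2\lambda^{d}\,(|\bar{x}|^2 + 2\bar{a}\cdot\bar{x} - a_d)$. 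Substituting $\lambda = -\frac{1}{2(|\bar{x}|^2+a_d)}$ and expanding the square to rewrite $|\bar{x}|^2 + 2\bar{a}\cdot\bar{x} - a_d = \sum_{i=1}^{d-1}(x_i+a_i)^2 - a_1^2 - \cdots - a_{d-1}^2 - a_d$ then gives the stated identity.

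The computation carries no conceptual difficulty once the rank-one structure of $J$ is spotted; the only point demanding care is the bookkeeping of signs and the exact power of $2\phi$, so that the prefactor comes out as $2\bigl(-\frac{1}{2(|\bar{x}|^2+a_d)}\bigr)^{d}$ rather than an off-by-one variant. (One could also bypass the matrix determinant lemma entirely and evaluate $\det J$ by a direct row reduction, or by diagonalizing the rank-one perturbation, but the route above is the shortest.)
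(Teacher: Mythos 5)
Your computation is correct, and it exploits the same structural observation as the paper's proof: after the quotient rule, the Jacobian matrix is a scalar multiple of the identity plus a rank-one term, $J = \lambda I_{d-1} + \phi^{-2}(\bar{x}+\bar{a})\bar{x}^{T}$ with $\lambda = -1/(2\phi)$. Where the two arguments diverge is in how they finish: the paper notices that the off-diagonal matrix $(a_{ij})$ has every pair of rows linearly dependent and then does an explicit row reduction, clearing the lower-right block to $zI_{d-2}$ after dividing by $a_{11}$; this forces a case split, since the reduction requires $a_{11}\neq 0$ and the $a_{11}=0$ case must then be verified separately. You instead invoke the matrix determinant lemma $\det(\lambda I + uw^{T}) = \lambda^{d-1}(1+\lambda^{-1}w^{T}u)$, which packages the same rank-one observation into a single identity valid without any nondegeneracy hypothesis on the entries of $u$ or $w$. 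The payoff of your route is precisely that it eliminates the case distinction and makes the bookkeeping of the prefactor $2\lambda^{d}$ transparent; the payoff of the paper's route is that it is entirely self-contained and does not presuppose the determinant lemma. Both are valid, and your final algebraic simplification to $\sum_{i=1}^{d-1}(x_i+a_i)^2 - |\bar{a}|^2 - a_d$ agrees with the stated formula.
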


\begin{proof}
Recall the definition 
$$
T_a \colon \bar{x} \longmapsto -\frac{\bar{x}+\bar{a}}{2(|\bar{x}|^2+a_d)} = \bar{y}.
$$
So, for each $1\leq i \leq d-1$,
$$
y_i = \frac{-x_i-a_i}{2(x_1^2+ \cdots +x_{d-1}^2 +a_d)}.
$$
By direct calculation, for $1\leq i\neq j \leq d-1$,
$$
\frac{\partial y_i}{\partial x_i}
= \frac{-1}{2(|\bar{x}|^2+a_d)} + \frac{2x_i(x_i+a_i)}{2(|\bar{x}|^2+a_d)^2}
$$
and
$$
\frac{\partial y_i}{\partial x_j}
= \frac{2x_j(x_i+a_i)}{2(|\bar{x}|^2+a_d)^2}.
$$
If we let
$$
a_{ij} \coloneqq \frac{x_j(x_i+a_i)}{(|\bar{x}|^2+a_d)^2},\;\;\;
z \coloneqq \frac{-1}{2(|\bar{x}|^2+a_d)},
$$
we can write 
$$
\frac{\partial y_i}{\partial x_i} = a_{ii}+z,\quad \frac{\partial y_i}{\partial x_j} = a_{ij}.
$$
Also note that in the matrix $(a_{ij})_{1\leq i,j\leq d-1}$, every two rows are linearly dependent. Therefore, we can apply row operations to calculate the Jacobian determinant:
$$
\det
\begin{pmatrix}
    a_{11}+z &a_{12} &\cdots &a_{1,d-1}\\
    a_{21} &a_{22}+z &\cdots &a_{2,d-1}\\
    \vdots &\vdots &\ddots &\vdots \\
    a_{d-1,1} &a_{d-1,2} &\cdots &a_{d-1,d-1}+z
\end{pmatrix}.
$$
By applying row operations (adding a multiple of the first row to other rows), we can turn the lower right $(d-2) \times (d-2)$ matrix into $z I_{d-2}$, transform the upper right $1\times(d-2)$ matrix into the zero vector, and turn the $(1,1)$-component into
$$
z + a_{11} + \sum_{i=2}^{d-1} a_{1,i} \frac{a_{i,1}}{a_{11}}
$$
under the assumption $a_{11} \neq 0$. Therefore, we obtain
$$
\begin{aligned}
    \operatorname{Jac} T_a &= z^{d-2} \left( z + a_{11} + \sum_{i=2}^{d-1} a_{1,i} \frac{a_{i,1}}{a_{11}} \right)\\
    &= z^{d-2} \left( \frac{-1}{2(|\bar{x}|^2+a_d)} + \frac{(x_1+a_1)2x_1}{2(|\bar{x}|^2+a_d)^2} + \sum_{i=2}^{d-1} \frac{(x_1+a_1)2x_i}{2(|\bar{x}|^2+a_d)^2} \cdot\frac{x_i+a_i}{x_1+a_1} \right)\\
    &= \frac{z^{d-2}}{2(|\bar{x}|^2+a_d)^2} \left[ -|\bar{x}|^2-a_d +\sum_{i=1}^{d-1} 2x_i(x_i+a_i) \right]\\
    &= 2z^d \left[ \sum_{i=1}^{d-1} (x_i+a_i)^2 -a_1^2 -\cdots -a_{d-1}^2 -a_d \right]\\
    &= 2 \left( \frac{-1}{2(|\bar{x}|^2+a_d)} \right)^d
    \left[ \sum_{i=1}^{d-1}(x_i+a_i)^2 -a_1^2 -\cdots -a_{d-1}^2 -a_d \right]
\end{aligned}
$$
provided that $a_{11} \neq 0$. Finally, we verify that the formula also holds for $a_{11}= 0$ by direct computation.

\end{proof}

\subsection{A technical lemma concerning Theorem \ref{projection theorem}}
This subsection presents a lemma that plays a crucial role in Section 3.

\begin{lemma} \label{technical lemma}
Suppose $M\subset \mathbb{R}^d$ is a compact $(d-1)$-dimensional hypersurface that does not contain the origin and there exists a compact subset $K$ of $
\mathbb{S}^{d-1}$ and a $C^1$-function $r \colon K\subset \mathbb{S}^{d-1} \to \mathbb{R}_{>0}$ such that $M$ can be described by the bijective map $\imath$
$$
\function{\imath}{K\subset \mathbb{S}^{d-1}}{M}{e}{r(e) e}
$$
Then, there exists a constant $N>0$ such that for all $e\in \mathbb{S}^{d-1}$ and for all sufficiently small $\delta>0$, the intersection of $M$ with any tube $T_\delta(e)$ of radius $\delta$, emanating from the origin in the direction of $e$, can be covered by at most $N$ balls of radius $2\delta$. 
\end{lemma}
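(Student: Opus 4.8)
The plan is to transfer the tube condition back to the sphere via the inverse of $\imath$, observe that it confines the preimage of $M\cap T_\delta(e)$ to a spherical cap of angular radius $O(\delta)$, and then transport that small set back to $M$ using the Lipschitz continuity of $\imath$. Since the covering number of a set of diameter $O(\delta)$ by balls of radius $2\delta$ is bounded by a constant, that constant will be the desired $N$.

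First I would record two preliminary facts. Since $M$ is compact, does not contain the origin, and $\imath$ is a continuous bijection onto $M$ with $r>0$, there are constants $0<c\le C<\infty$ with $c\le r(e)\le C$ for all $e\in K$; in particular $|x|\in[c,C]$ for every $x\in M$. Secondly, since $r$ is $C^1$ (hence has bounded gradient along the compact set $K$, or extends $C^1$-ly to a neighbourhood of it), $r$ is Lipschitz on $K$, and therefore so is $\imath\colon e\mapsto r(e)e$, being a product of bounded Lipschitz maps; fix a Lipschitz constant $L$ for $\imath$, so that $\operatorname{diam}\imath(S)\le L\operatorname{diam}(S)$ for every $S\subseteq K$.

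Next comes the geometric heart of the argument. Write $\ell_e$ for the ray $\{te:t\ge 0\}$, so that $T_\delta(e)$ is contained in the $\delta$-neighbourhood of $\ell_e$. For a point $x=r(\omega)\omega\in M$ with $\omega\in K$: if $\omega\cdot e<0$ then the closest point of $\ell_e$ to $x$ is the origin, so $\operatorname{dist}(x,\ell_e)=|x|\ge c>\delta$ once $\delta<c$; and if $\omega\cdot e\ge 0$ then $\operatorname{dist}(x,\ell_e)=r(\omega)\sqrt{1-(\omega\cdot e)^2}$. Hence, for $\delta<c$, membership $x\in T_\delta(e)$ forces $\omega\cdot e\ge 0$ and $\sqrt{1-(\omega\cdot e)^2}\le\delta/r(\omega)\le\delta/c$, i.e.\ $\omega$ lies in the spherical cap $\mathcal{C}\coloneqq\{\omega\in\mathbb{S}^{d-1}:\omega\cdot e\ge 0,\ \sqrt{1-(\omega\cdot e)^2}\le\delta/c\}$. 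A direct estimate ($1-\omega\cdot e\le 1-(\omega\cdot e)^2\le\delta^2/c^2$) shows $\mathcal{C}$ is contained in a Euclidean ball of radius $\lesssim\delta/c$ about $e$, so $\operatorname{diam}\mathcal{C}\lesssim\delta/c$. Consequently $\imath^{-1}(M\cap T_\delta(e))\subseteq K\cap\mathcal{C}$ has diameter $\lesssim\delta/c$.

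Finally I would conclude. Applying $\imath$, the set $M\cap T_\delta(e)=\imath\bigl(\imath^{-1}(M\cap T_\delta(e))\bigr)$ has diameter $\lesssim L\delta/c$, hence lies inside a single ball of radius $\rho\lesssim L\delta/c$; a ball of radius $\rho$ can always be covered by at most $(1+2\rho/\delta)^d$ balls of radius $2\delta$ by a standard volume-packing estimate, so with $\rho\lesssim L\delta/c$ we obtain a covering by at most $N$ balls of radius $2\delta$, where $N$ depends only on $d$, $L$ and $c$, and not on $e$ or $\delta$. (If one works instead with a two-sided tube around the whole line $\mathbb{R}e$, the preimage lies in two such caps and the bound becomes $2N$, still a constant.) I do not foresee a genuine obstacle: the only slightly delicate points are that $r$ is bounded below — which is precisely what makes the tube condition non-vacuous and prevents $\imath^{-1}=x\mapsto x/|x|$ from degenerating near the origin — and the upgrade of ``$C^1$ on the compact set $K$'' to a global Lipschitz bound for $\imath$, both of which are routine.
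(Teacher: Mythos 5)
Your proof is correct and follows essentially the same route as the paper's: both pass to the sphere via $\imath^{-1}$, observe that the preimage of $M \cap T_\delta(e)$ is confined to a spherical cap of angular radius $O(\delta)$ (using the positive lower bound $R_0 = \min_K r$), and use the $C^1$ regularity of $r$ on the compact set $K$ to control how much $r$ can vary over that cap. The only packaging difference is that you bound the Euclidean diameter of $M \cap T_\delta(e)$ directly through a Lipschitz constant for $\imath$, whereas the paper estimates the extent of the intersection in the direction $e$ (the ``intersection length'' $y_{\max}-y_{\min}$) and concludes the intersection sits inside a subtube of radius $\delta$ and length $\lesssim\delta$; both versions yield a covering by a uniformly bounded number of $2\delta$-balls, and your diameter bound is, if anything, a mild streamlining of the paper's chain of estimates through $\xi_{\max},\xi_{\min},r_{\max},r_{\min}$.
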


\begin{figure}[!ht]
    \centering
    \includegraphics[scale=0.5]{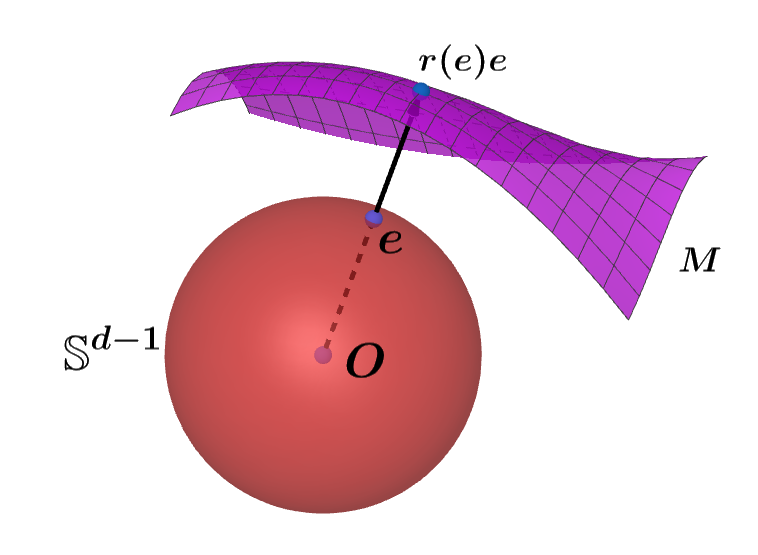}
    \caption{$M$ can be described by $\imath$}
    \label{technical}
\end{figure}
\FloatBarrier

\begin{remark}
In the statement, we require $r \in C^1(K) $, which means that the function $r$ can actually be extended to a small neighborhood of $K \subset \mathbb{S}^{d-1}$ such that the extension is $C^1$.
\end{remark}

\begin{remark}
In fact, the constant $2$ in $2\delta$ can be replaced by any value larger than $1$. The key point is that when applying Theorem \ref{projection theorem} to a compact subset $E \subset M$, where $M$ satisfies the conditions of this lemma, we can always take $l = 0$. Here, $l$ corresponds to $l_F$ in the statement of Theorem \ref{projection theorem}. This follows from the fact that the intersection region is contained within a subtube of the same radius, but with a length $\lesssim \delta$. In the proof, we refer to the length of the shortest such tube as the "intersection length."
\end{remark}

\begin{proof}
Since $r$ is continuous and strictly positive, the hypersurface $M$ is bounded away from the origin. Moreover, let $R$ be the maximum of $r$ on $K$, and let $R_0$ be the minimum. Consider any line with direction $e$ that passes through the origin. Note that the tube $T_\delta(e)$ is the $\delta$-neighborhood of this line. The tube $T_\delta(e)$ only intersects $M$ in the region $\imath(\overline{B_{C R_0^{-1}\delta} (e)} \cap \mathbb{S}^{d-1})$ where $C>0$ is a constant independent of both $e$ and $\delta$. Therefore, we only need to consider the function $r$ on a $C R_0^{-1} \delta$-neighborhood of $e$ in $\mathbb{S}^{d-1}$. Let $y_{\max{}}$ be the maximum of the function
$$
\function{\Pi^e}{\mathbb{R}^d}{\mathbb{R}_{>0}}{x}{x\cdot e}
$$
within the region $\imath(\overline{B_{C R_0^{-1}\delta} (e)} \cap \mathbb{S}^{d-1})$, and let $y_{\min{}}$ be the minimum, attained by $\xi_{\max{}}$ and $\xi_{\min{}}$, respectively. It follows by simple geometry that
$$
\text{intersection length} = y_{\max{}} - y_{\min{}}.
$$
We also let $r_{\max{}}$ be the maximum of the function $r$ within $\overline{B_{C R_0^{-1}\delta} (e)} \cap \mathbb{S}^{d-1}$, and let $r_{\min{}}$ be the minimum. Assume that they are attained by $\eta_{\max{}}$ and $\eta_{\min{}}$, respectively. Therefore, we have
$$
\begin{aligned}
\text{intersection length} 
&= y_{\max{}}-y_{\min{}} = \xi_{\max{}} \cdot e - \xi_{\min{}} \cdot e \\
&\leq |\xi_{\max{}}| - \sqrt{|\xi_{\min{}}|^2-\delta^2}\\
&\leq\; r_{\max{}} - \sqrt{r_{\min{}}^2-\delta^2}
=\; \frac{r_{\max{}}^2 - r_{\min{}}^2 +\delta^2}{r_{\max{}}+\sqrt{r_{\min{}}^2-\delta^2}}
\leq \frac{r_{\max{}}^2 - r_{\min{}}^2 +\delta^2}{R_0+ \frac{1}{2}R_0}\\
&\leq\; \frac{2R(r_{\max{}} - r_{\min{}}) +\delta^2}{\frac{3}{2}R_0}\\
&\leq\; \frac{2R\cdot C_1\delta+ \delta^2}{\frac{3}{2}R_0} \leq\; C_2 \delta.
\end{aligned}
$$
where the intersection length refers to the length of the intersection. The above implies that the intersection is contained in a subtube of radius $\delta$ but length $\lesssim \delta$, which can be covered by $N$ balls of radius $2\delta$ where $N$ is a constant independent of both $\delta$ and $e$.

\end{proof}

\begin{figure}[!ht]
    \centering
    \includegraphics[scale=0.6]{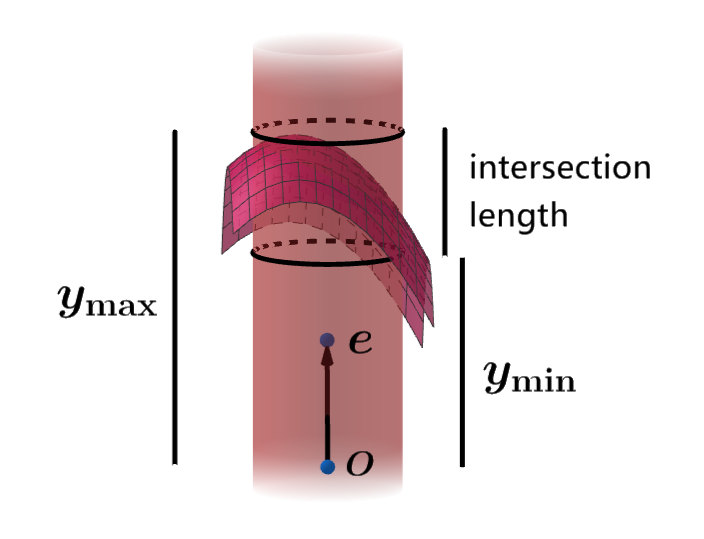}
    \caption{Geometric illustration of the intersection length in Lemma \ref{technical lemma}.}
    \label{intersection length}
\end{figure}
\FloatBarrier

\begin{remark}
It is noteworthy that we do not use the assumption that $M$ is a $(d-1)$-dimensional hypersurface. In fact, the proof still works for smaller compact sets $F$ that can be described using the polar coordinates and a radius function $r \in C^1(\imath^{-1}(F))$. Here, $C^1(\imath^{-1}(F))$ means the function $r$, defined on the compact subset $\imath^{-1}(F) \subset \mathbb{S}^{d-1}$, admits a $C^1$ extension to an open neighborhood of this domain. 
\end{remark}

\section{Proof of the main result}
In this section, we first state our strategy. We then identify two key obstacles and address them separately. Finally, we give a proof of the main results, serving as a concluding argument.

\subsection{Strategy}
We will express the dot product on paraboloids in terms of one lower-dimensional distance, enabling us to apply the pinned distance results. However, two key issues must be addressed.

First, this approach does not work when $|\bar{x}|^2 + a_d = 0$, which is evident from the definition of transformation maps. Second, we must ensure that the transformation maps preserve dimension. By Lemma \ref{not lose dimension} and Proposition \ref{Jacobian of transformation maps}, the dimension is preserved as long as the set under consideration stays away from the region where the Jacobian vanishes.

Finally, we have to address the following two problematic parts:
\begin{enumerate}
    \item Singularity: The region where $|\bar{x}|^2+a_d=0$, which occurs when $a_d \leq 0$.
    \item Degenerate region: The region where $T_a$ has zero Jacobian. By Proposition \ref{Jacobian of transformation maps}, this region is given by
    $$
    \{ (\bar{x}+\bar{a}, |\bar{x}|^2 + a_d) \colon \sum_{i=1}^{d-1}(x_i+a_i)^2 = a_1^2 +\cdots + a_{d-1}^2 +a_d \}
    $$
    where $\bar{x}=(x_1,\cdots,x_{d-1})$. This region occurs when $a_1^2 +\cdots + a_{d-1}^2 +a_d \geq 0$.
\end{enumerate}

\subsection{Singularity}\label{Singularity}
From the calculations in the previous section, it is clear that we cannot analyze the dot product through pinned distance results for compact subsets of the singularity:
$$
S \coloneqq \{ (\bar{x}+\bar{a},|\bar{x}|^2+a_d) \colon |\bar{x}|^2+a_d=0\} = 
\{ (\bar{x}+\bar{a},0) \colon |\bar{x}|^2+a_d=0\}.
$$
Instead, we analyze the dot product directly using Theorem \ref{projection theorem}.

Note that the singularity set
$$
S = \{ (\bar{x}+\bar{a},|\bar{x}|^2+a_d) \colon |\bar{x}|^2+a_d=0\} = 
\{ (\bar{x}+\bar{a},0) \colon |\bar{x}|^2+a_d=0\}
$$
occurs only when $a_d\leq 0$. If $a_d=0$, then $S$ consists of a single point, which together with a small neighborhood of itself can be removed by the pigeonhole principle. From now on, assume throughout this subsection that $a_d<0$.

When considering the dot product, the $d$-th component is always zero, so we can focus only on the first $d-1$ components and work within $\mathbb{R}^{d-1}$. Note again that
$$
\{ \bar{x}+\bar{a} \colon |\bar{x}|^2+a_d=0\}
$$
is a $(d-2)$-dimensional sphere of radius $\sqrt{-a_d}$ centered at $\bar{a}$.

\begin{figure}[!ht]
    \centering
    \includegraphics[scale=0.5]{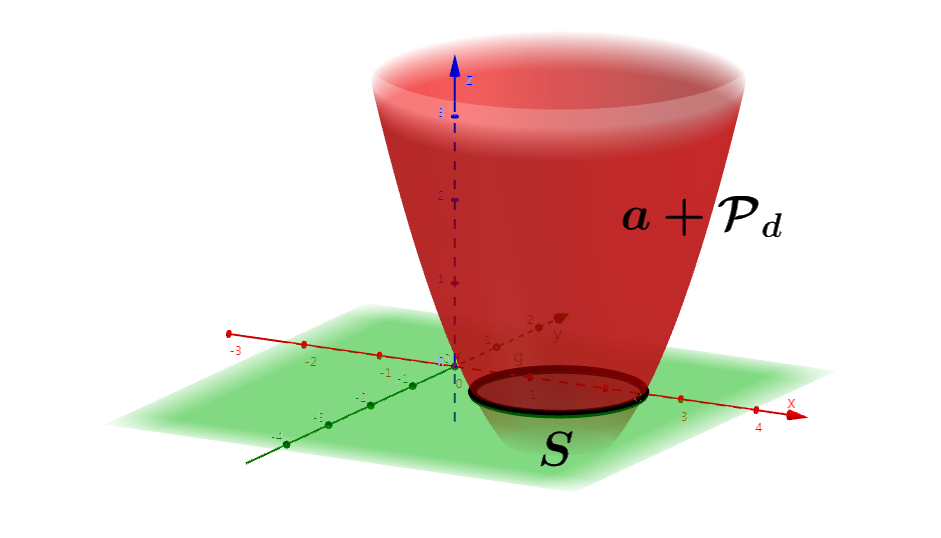}
    \caption{Singularity}
    \label{singularity}
\end{figure}
\FloatBarrier

There are three cases to distinguish: whether the origin is inside the sphere, on the sphere, or outside the sphere. We explore the dimensional threshold ensuring every compact subset $E$ of $\sqrt{-a_d}\;\mathbb{S}^{d-2}+\bar{a}$ has a product set of positive Lebesgue measure. (Note: Since we are working in $\mathbb{R}^{d-1}$, we need to replace $d$ with $d-1$ when applying Theorem \ref{projection theorem}.) 
~\\

\noindent \textbf{\underline{Case 1: $O$ is inside $\sqrt{-a_d}\;\mathbb{S}^{d-2}+\bar{a}$}}
~\\\\
In this case, we set $K = \mathbb{S}^{d-2}$ and $M = \sqrt{-a_d}\;\mathbb{S}^{d-2}+\bar{a}$. Applying Lemma \ref{technical lemma} in $(d-1)$ dimensions, we find that for any tube $T_\delta(e)$, which has length $\approx 1$ and radius $\delta$, emanating from the origin in the direction $e$, the intersection with $M$ can be covered by at most $N$ balls of radius $2\delta$. 

Thus, for any $s$-dimensional Frostman measure $\mu$ supported on the sphere $\sqrt{-a_d}\;\mathbb{S}^{d-2}+\bar{a}$, we obtain the bound:
$$
\mu(T_\delta(e)) \leq \sum_{i=1}^N \mu(B_i) \lesssim N (2\delta)^s = (N 2^s) \delta^s,
$$
where $B_1,\dots,B_N$ are balls of radius $2\delta$ covering the intersection of $T_\delta(e)$ and $M$.

Finally, we apply Theorem \ref{projection theorem} and conclude that the dimension threshold in this case is $\frac{d-1+0}{2}=\frac{d-1}{2}$.

\begin{figure}[!ht]
    \centering
    \includegraphics[scale=0.5]{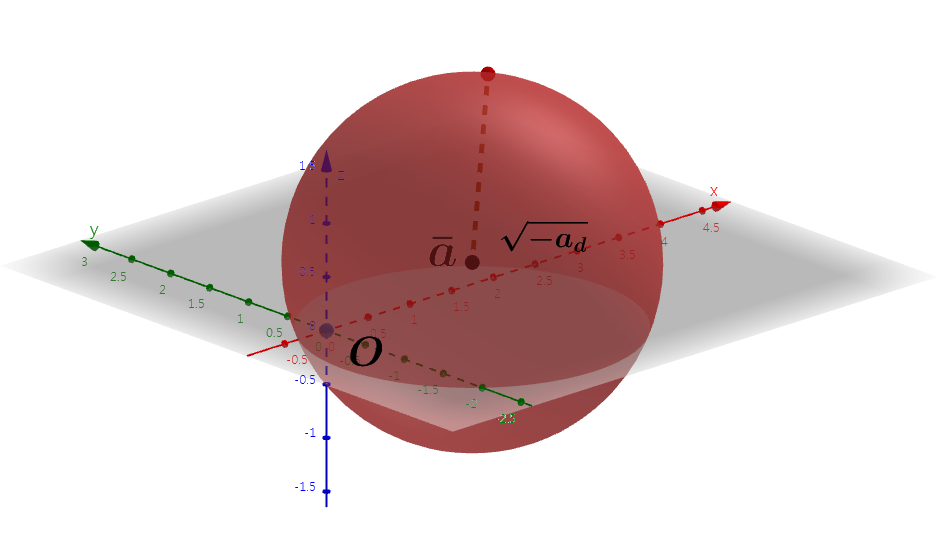}
    \caption{$O$ is inside the sphere.}
    \label{inside the sphere}
\end{figure}
\FloatBarrier

~\\

\noindent \textbf{\underline{Case 2: $O$ lies on $\sqrt{-a_d}\;\mathbb{S}^{d-2}+\bar{a}$}}
~\\\\
In this case, we exclude the origin and a sufficiently small $\varepsilon$-neighborhood around it. We define $K$ as the half-sphere with a small neighborhood of the boundary removed, and set $M = (\sqrt{-a_d}\;\mathbb{S}^{d-2}+\bar{a}) \setminus B_\varepsilon(O)$. We then apply Lemma \ref{technical lemma}, since the function $r > 0$ can be extended to a $C^1$ function on the half-sphere excluding the boundary. Finally, applying Theorem \ref{projection theorem}, we obtain the same dimensional threshold $\frac{d-1}{2}$.

\begin{figure}[!ht]
    \centering
    \includegraphics[scale=0.5]{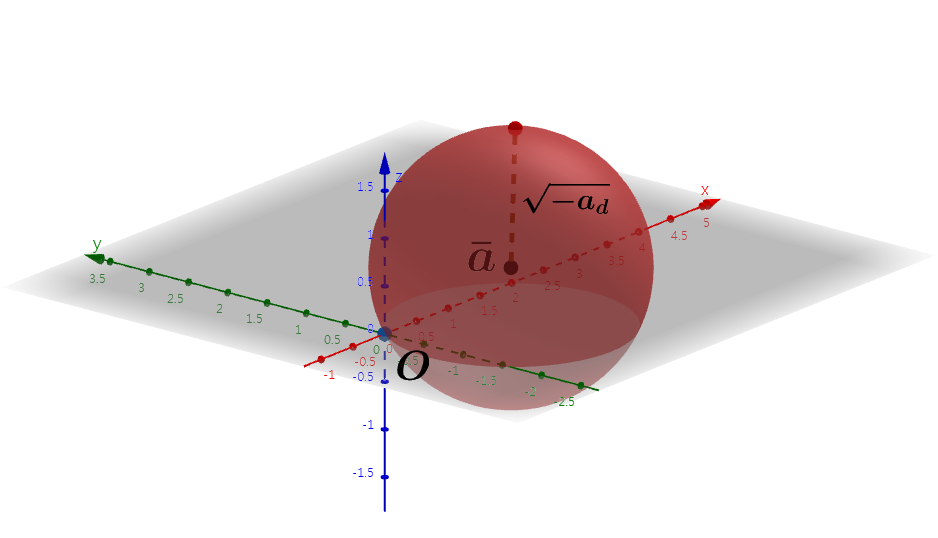}
    \caption{$O$ lies on the sphere.}
    \label{on the sphere}
\end{figure}
\FloatBarrier

~\\

\noindent \textbf{\underline{Case 3: $O$ is outside $\sqrt{-a_d}\;\mathbb{S}^{d-2}+\bar{a}$}}
~\\\\
This case is more complicated, requiring us to divide the translated sphere into three parts: the visible part, the tangent part, and the invisible part. We apply Lemma \ref{technical lemma} to both the visible and invisible parts, excluding a small neighborhood around the tangent part, which yields the threshold $\frac{d-1}{2}$. The tangent part, which is a $(d-3)$-dimensional sphere, along with its small neighborhood represents the most challenging scenario.

\begin{figure}[!ht]
    \centering
    \includegraphics[scale=0.5]{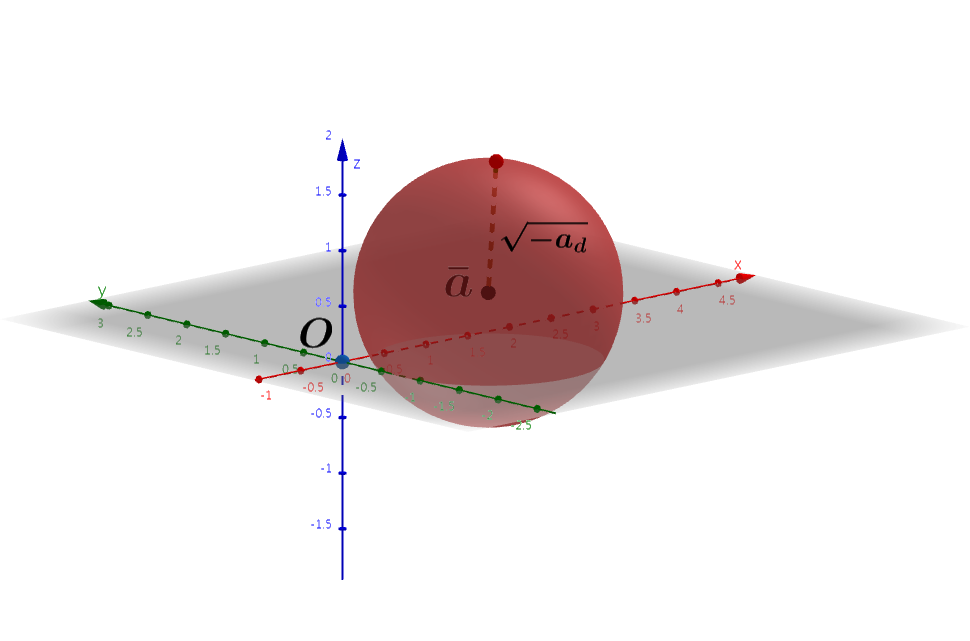}
    \caption{$O$ is outside the sphere.}
    \label{outside the sphere}
\end{figure}
\FloatBarrier

\begin{figure}[!ht]
    \centering
    \includegraphics[scale=0.5]{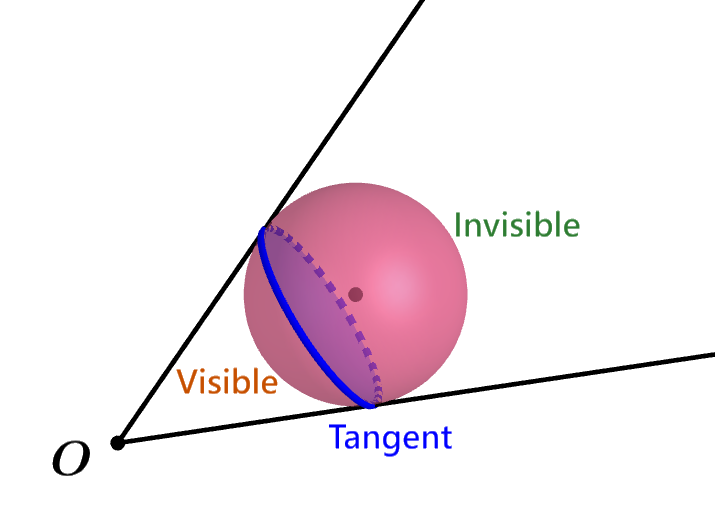}
    \caption{Visible, tangent, and invisible parts of the sphere}
    \label{divided sphere}
\end{figure}
\FloatBarrier

Now, we show that the set under consideration, $E$, which is a compact subset of the sphere $S = \sqrt{-a_d}\; \mathbb{S}^{d-2} + \bar{a}$, can be reduced either to a compact subset of the tangent part $T$, or to a compact subset that does not intersect $T$. This is done via a pigeonholing argument.

Suppose $\dim_H E > \alpha$ for a dimensional threshold $\alpha \in (0, d)$. If $\dim_H (E \cap T) > \alpha$, then we consider the compact set $E \cap T$, which is a compact subset of both $E$ and $T$ with dimension greater than $\alpha$. On the other hand, if $\dim_H (E \cap T) \leq \alpha$, pick some $0 < \varepsilon < \dim_H E - \alpha$. Denote by $\mathcal{H}^{\alpha+\varepsilon}$ the $(\alpha+\varepsilon)$-dimensional Hausdorff measure, and let $T^{\beta}$ be the $\beta$-neighborhood of $T$. By the fact that Hausdorff measures are Borel and by monotonicity,
$$
\infty = \mathcal{H}^{\alpha+\varepsilon}(E \ \setminus \ T) = \lim_{n \to \infty} \mathcal{H}^{\alpha+\varepsilon}(E \ \setminus \ T^{\frac{1}{n}}).
$$
Thus, there exists $N$ large enough such that
$$
\mathcal{H}^{\alpha+\varepsilon}(E \ \setminus \ T^{\frac{1}{N}}) > 0,
$$
which implies that the set $E \ \setminus \ T^{\frac{1}{N}}$ has dimension at least $\alpha + \varepsilon$. Moreover, it is a compact set bounded away from $T$. Therefore, we have reduced $E$ to a compact subset that does not intersect $T$ and has dimension greater than $\alpha$.

We now handle the case where $E$ is a compact subset of $S$ that does not intersect $T$. Note that in this case, either the portion in the visible part or that in the invisible part has large dimension. Recall that both the visible and invisible parts can be parametrized by polar coordinates. Therefore, the dimensional threshold ensuring that $E$ has a dot product set of positive measure is $\frac{d-1}{2}$ in this case.

The case where $E$ is a compact subset of $T$ requires closer examination. Observe that all the points in $T$ have the same distance to the origin. When considering the projection $\imath^{-1}(T)$ of $T$ onto $\mathbb{S}^{d-2}$, the radius function $r \colon \imath^{-1}(T) \to \mathbb{R}_{>0}$ is constant on its domain. This shows that it admits a $C^1$ extension to an open neighborhood of its domain, implying $r \in C^1(\imath^{-1}(T))$. Recall that the remark following Lemma \ref{technical lemma} relaxes the assumption and enables the lemma to apply to smaller sets. Therefore, we are able to apply Lemma \ref{technical lemma} to $T$ and $\imath^{-1}(T)$ and conclude that the intersection of $T$ with each tube $T_\delta(e)$ has length $\lesssim \delta$. Hence, the exponent is also $\frac{d-1}{2}$.

In fact, we can avoid using the remark after Lemma \ref{technical lemma}. Note that $T$ is contained in $R\; \mathbb{S}^{d-2}$, where $R = \sqrt{|\bar{a}|^2 + a_d}$ is the distance from each point in $T$ to the origin. We can then apply Lemma \ref{technical lemma} directly to $R\; \mathbb{S}^{d-2}$. Thus, since $(T_\delta(e) \cap T) \subset (T_\delta(e) \cap R\; \mathbb{S}^{d-2})$, it can be covered by at most $N$ balls of radius $2\delta$, which yields the exponent $\frac{d-1}{2}$. 

Although the two arguments appear different, they share a common insight: embed $T$ into another hypersurface that can be described using polar coordinates.

By discussing the three cases, we also obtain a by-product theorem. (Note: When discussing the singularity $S$, we are working within $\mathbb{R}^{d-1}$. However, the underlying space of the following theorem is $\mathbb{R}^d$. As a result, the exponents may appear different.)

\begin{theorem}
Let $a \in \mathbb{R}^d$ be any translation vector. Suppose $E$ is a compact subset of the translated sphere $a + \mathbb{S}^{d-1}$. If $\dim_H E > \frac{d}{2}$, then there exists $x \in E$ such that
$$
|\Pi^x(E)| > 0.
$$
In particular, $|\Pi(E)| > 0$.
\end{theorem}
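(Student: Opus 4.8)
The plan is to reduce the dot product on the translated sphere $a+\mathbb{S}^{d-1}$ to a pinned distance problem in $\mathbb{R}^d$, exactly as in the key observation of Section~2, but adapted to the sphere rather than the paraboloid. First I would write a point of $a+\mathbb{S}^{d-1}$ as $x = \bar{x}+a$ with $|\bar{x}|=1$, and for two such points compute
$$
(\bar{x}+a)\cdot(\bar{y}+a) = \bar{x}\cdot\bar{y} + a\cdot\bar{x} + a\cdot\bar{y} + |a|^2.
$$
Since $|\bar{x}|=|\bar{y}|=1$, we have $\bar{x}\cdot\bar{y} = 1 - \tfrac12|\bar{x}-\bar{y}|^2$, so the dot product is an affine function of $|\bar{x}-\bar{y}|^2$ together with the linear terms $a\cdot(\bar{x}+\bar{y})$. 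Completing the square in $\bar{y}$ (treating $\bar{x}$ as fixed, as a pin) turns the expression into a constant multiple of $\bigl|\bar{y} - c(\bar{x},a)\bigr|^2$ plus a term depending only on $\bar{x}$ and $a$, where $c(\bar{x},a)$ is an explicit point. Concretely, $(\bar{x}+a)\cdot(\bar{y}+a) = -\tfrac12|\bar{y} - (\bar{x}+a)|^2 + (\text{terms independent of }\bar{y})$ after using $|\bar{y}|^2=1$; so the pinned dot product set $\Pi^{x}(E)$ is, up to an affine change of variables with nonzero scaling, the pinned distance-squared set $(\Delta^2)^{\bar{x}+a}(\bar{E})$ where $\bar{E} = \{\bar{y}\in\mathbb{S}^{d-1} : \bar{y}+a\in E\}$.

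Next I would note that positivity of Lebesgue measure is preserved under affine maps with nonzero linear part, and that the distance-squared set has positive measure if and only if the distance set does. So it suffices to find a pin $\bar{x}\in\bar{E}$ (or rather a suitable point playing the role of the pin, namely $-(\bar{x}+a)$) such that $\Delta^{\bar{x}+a}(\bar{E})$ has positive Lebesgue measure. Here $\bar{E}$ is a compact subset of $\mathbb{S}^{d-1}\subset\mathbb{R}^d$ with $\dim_H\bar{E} = \dim_H E > \tfrac{d}{2}$, since translation is an isometry. The pin, however, is the point $-(\bar{x}+a)$, which need \emph{not} lie on $\bar{E}$ or even near it — this is the point where the sphere case differs slightly from the clean statement of Theorem~\ref{pinned distance for d>=3}, which produces a pin inside the set. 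To handle this I would invoke Theorem~\ref{state-of-the-art pinned distance we use} (the separated two-set version), taking $E_1 = \bar{E}$ and $E_2$ to be a translate $-2a + (-\bar{E})$ of $-\bar{E}$ — i.e., the set of candidate pins $\{-(\bar{x}+a):\bar{x}\in\bar{E}\}$, which has the same dimension as $\bar E$; then there exists a pin in $E_2$ realizing a positive-measure pinned distance set of $E_1$, and pulling this pin back through the correspondence $\bar{x}\mapsto-(\bar{x}+a)$ gives the desired $x\in E$.

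There is one more wrinkle: the completing-the-square step requires the scaling coefficient in front of $|\bar{y}-c|^2$ to be nonzero, which for the sphere (unlike the paraboloid) is automatic — after using $|\bar y|^2 = 1$ the coefficient is the constant $-\tfrac12$, with no degenerate locus, so \textbf{there is no singularity and no degenerate region to excise}. This is precisely why the sphere case gives the clean threshold $\tfrac d2$ with no loss, matching Corollary~\ref{Sphere dot product} for the untranslated case. The main obstacle, then, is purely bookkeeping: carefully tracking the affine change of variables so that the pinned dot product set is genuinely an affine image of a pinned distance-squared set with nonzero dilation (so that positive measure transfers), and correctly setting up the two-set pinned distance input so that the pin lands in a set of the right dimension rather than requiring it to lie in $\bar E$ itself. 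Once the correspondence is set up cleanly, the result follows immediately from Theorem~\ref{state-of-the-art pinned distance we use} applied in dimension $d$, since $\dim_H E > \tfrac d2 > \tfrac d2 + \tfrac14 - \tfrac{1}{8d+4}$ fails — wait, one must check the arithmetic: in fact for the sphere we should instead appeal directly to Corollary~\ref{Sphere dot product} and its pinned refinement (the remark after it), which already gives the pinned statement at threshold $\tfrac d2$ for subsets of $\mathbb{S}^{d-1}$; the translation then adds nothing since it is an isometry affecting only the affine constant, not the curvature. So the cleanest route is: reduce by the computation above to a pinned distance problem for $\bar E\subset\mathbb{S}^{d-1}$, then quote the pinned version of Corollary~\ref{Sphere dot product} rather than re-deriving it.
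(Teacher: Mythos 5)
The proposal does not close the proof, and the gap is precisely the threshold. Your reduction $(\bar x + a)\cdot(\bar y + a) = -\tfrac12\,|\bar y - (\bar x + a)|^2 + c(\bar x, a)$ is correct and does eliminate any singularity or degeneracy (since $|\bar y|^2=1$ makes the quadratic coefficient a constant $-\tfrac12$). But it turns the pinned dot product problem for $E$ into a pinned distance problem for $\bar E = E - a \subset \mathbb{S}^{d-1}$ with the pin at $\bar x + a$, a point that lies on $a+\mathbb{S}^{d-1}$, not on $\bar E$. Your first route — feeding $E_1=\bar E$ and a translate of it into Theorem~\ref{state-of-the-art pinned distance we use} — therefore needs the two-set pinned distance theorem, whose threshold is $\tfrac d2 + \tfrac14 - \tfrac{1}{8d+4}$ (or $\tfrac 54$ for $d=2$), which is strictly larger than $\tfrac d2$. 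You notice this and pivot; but the pivot fails too. The pinned refinement of Corollary~\ref{Sphere dot product} gives $|\Delta^y(\bar E)|>0$ for $\mu$-a.e.\ $y\in\bar E$, i.e.\ pins on $\mathbb{S}^{d-1}$, whereas your reduction demands a pin at $\bar x + a$ off the unit sphere. The claim that ``the translation adds nothing since it is an isometry'' is precisely where the argument breaks: $\Pi$ is not translation-invariant, so translation changes where the pin must sit, and Corollary~\ref{Sphere dot product} does not apply to $E$ or supply a pin at the location your identity requires.

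The paper's proof does not pass through the pinned-distance machinery at all. It applies Theorem~\ref{projection theorem} directly to $E\subset a+\mathbb{S}^{d-1}$, and the whole point is to establish the tube estimate $\mu_F(T_\delta)\lesssim\delta^{s_F}$ (i.e.\ $l_F=0$) for a Frostman measure on the translated sphere; then $s_E+s_F>d$ gives the threshold $\tfrac d2$. Achieving $l_F=0$ is nontrivial exactly when the origin lies outside the translated sphere: a tangent tube meets the sphere in a cap of diameter $\approx\delta^{1/2}$, which naively gives $l_F=\tfrac12$ and the worse threshold $\tfrac d2+\tfrac14$. The paper's fix (Subsection~\ref{Singularity}, imported here by the accompanying remark) is to pigeonhole $E$ onto either the visible part, the invisible part, or the tangent locus $T$, handle the first two via Lemma~\ref{technical lemma} (polar parametrization with $C^1$ radius function), and handle $T$ by embedding it into the origin-centered sphere $R\,\mathbb{S}^{d-1}$ of the appropriate radius so Lemma~\ref{technical lemma} again gives an $O(\delta)$ intersection. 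Your proposal never invokes Theorem~\ref{projection theorem} or Lemma~\ref{technical lemma}, never estimates tubes, and never addresses the tangent-tube issue — so even with your correct algebraic identity, the $\tfrac d2$ threshold is out of reach by the route you propose.
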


\begin{remark}
This generalizes Corollary \ref{Sphere dot product}. Consider a translated sphere that does not enclose the origin, and consider a tangent tube. The intersection of such a tube and the sphere has length $\approx \delta^{\frac{1}{2}}$, which requires $\approx \delta^{-\frac{1}{2}}$ balls of radius $2\delta$ to cover it. This is because the intersection lies within a spherical cap of thickness $\delta$. Hence, I previously derived the exponent $\frac{d + 1/2}{2} = \frac{d}{2} + \frac{1}{4}$. Later, I realized that a sharper exponent $\frac{d}{2}$ can be obtained by dividing the sphere, reducing to a subset via the pigeonhole principle, and embedding the tangent part into another hypersurface.
\end{remark}

\begin{figure}[!ht]
    \centering
    \includegraphics[scale=0.4]{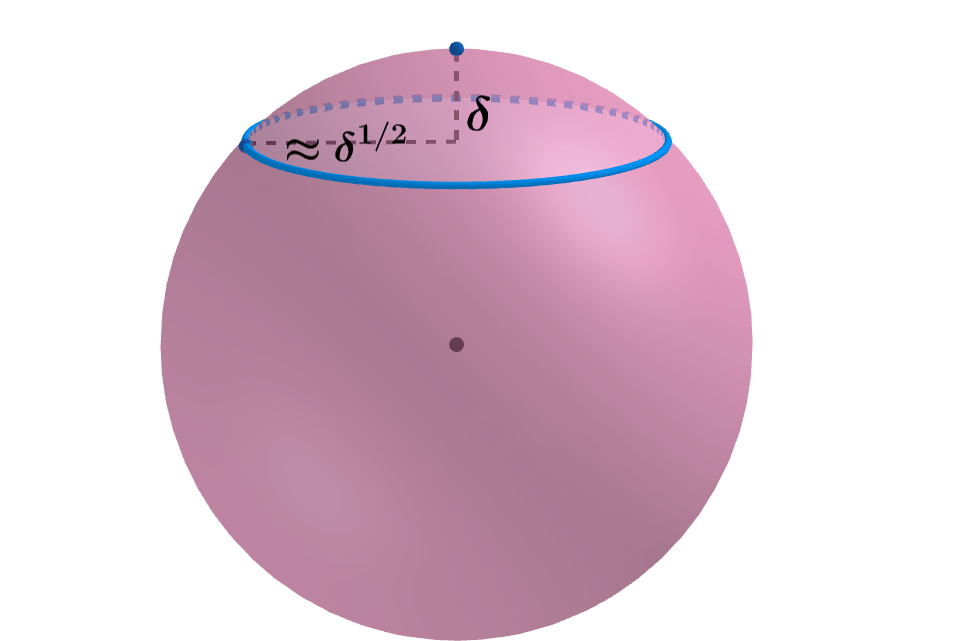}
    \caption{Spherical cap of thickness $\delta$}
    \label{spherical cap}
\end{figure}
\FloatBarrier

\subsection{Degenerate region}\label{Degenerate region}
Recall that the degenerate region refers to the set of points in $\mathbb{R}^{d-1}$ where the transformation map has zero Jacobian. From the calculations in Proposition \ref{Jacobian of transformation maps}, this set is given by:
$$
\bar{D} \coloneqq \{ \bar{x}=(x_1,\dots, x_{d-1}) \colon \sum_{i=1}^{d-1}(x_i+a_i)^2 = a_1^2 +\cdots +a_{d-1}^2 + a_d \, \}.
$$
If $a_1^2 +\cdots +a_{d-1}^2 + a_d<0$, then the set is empty. If $a_1^2 +\cdots +a_{d-1}^2 + a_d=0$, then the set consists of a single point, which also corresponds to a single point on the paraboloid $a+\mathcal{P}_d$ and can be ignored using the pigeonhole principle. From now on, we assume that $a_1^2 +\cdots +a_{d-1}^2 + a_d>0$ throughout this subsection.

We now examine the corresponding degenerate region $D$ on the translated paraboloid $a+\mathcal{P}_d$:
$$
\begin{aligned}
D &= \{ (\bar{x}+\bar{a},|\bar{x}|^2+a_d) \colon \bar{x} \in \bar{D} \}\\
&= \{ (\bar{x}+\bar{a},|\bar{x}|^2+a_d) \colon \sum_{i=1}^{d-1}(x_i+a_i)^2 = a_1^2 +\cdots +a_{d-1}^2 + a_d \, \}\\
&= \{ (\bar{x}+\bar{a},|\bar{x}|^2+a_d) \colon |\bar{x}+\bar{a}|^2 = a_1^2 +\cdots +a_{d-1}^2 + a_d \, \}.
\end{aligned}
$$
It follows that the degenerate region $D$ on the paraboloid is precisely the intersection of the translated paraboloid and the cylinder:
$$
\{ (y_1,\dots,y_{d-1},y_d) \in \mathbb{R}^d \colon \sum_{i=1}^{d-1} y_i^2 = a_1^2 +\cdots +a_{d-1}^2 + a_d \, \}.
$$
In other words, the degenerate region $D$ can be characterized by the following two algebraic equations:
$$
\left\{
\begin{aligned}
    \, & y_d -a_d = (y_1-a_1)^2 + \cdots + (y_{d-1}-a_{d-1})^2, \\ 
    \, & y_1^2 + \cdots + y_{d-1}^2 = a_1^2 +\cdots +a_{d-1}^2 + a_d.
\end{aligned}
\right.
$$
Substituting the second equation into the first, we derive the equation of a hyperplane $H$:
$$
y_d = 2 (a_1^2 + \cdots + a_{d-1}^2 + a_d ) - 2a_1 y_1 - \cdots - 2 a_{d-1} y_{d-1}.
$$
This defines a $(d-1)$-dimensional hyperplane in $\mathbb{R}^d$ that does not contain the origin. Since the hyperplane does not pass through the origin, it is bounded away from it. By basic high-dimensional geometry, we conclude that the degenerate region $D$, which is the intersection of the hyperplane $H$ and the cylinder, forms a $(d-2)$-dimensional ellipsoidal hypersurface.

\begin{figure}[!ht]
    \centering
    \includegraphics[scale=0.5]{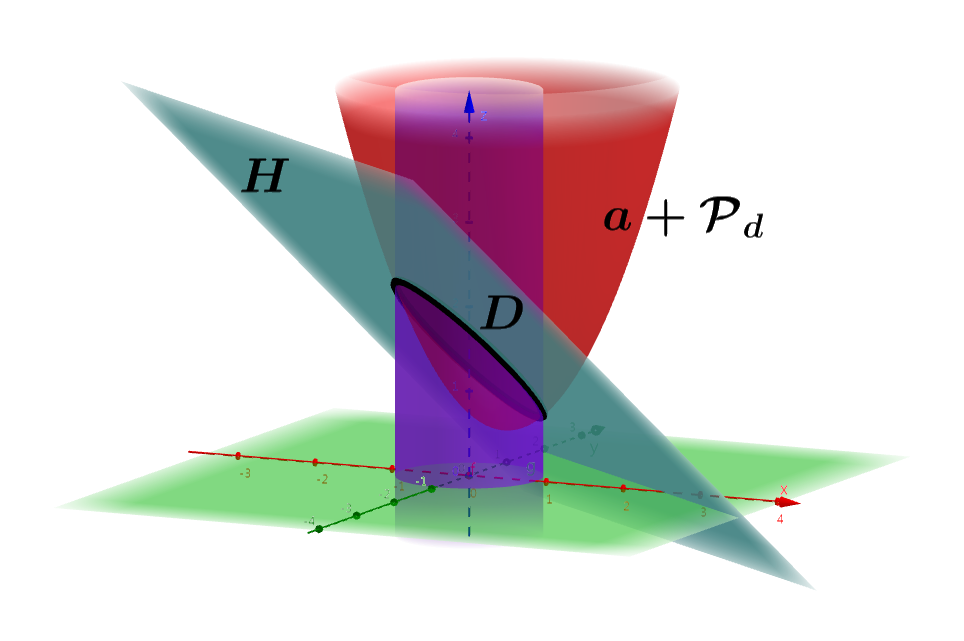}
    \caption{Degenerate region}
    \label{degenerate region}
\end{figure}
\FloatBarrier

Now, we make a crucial observation. We can apply a suitable rotation $g \in \mathbb{O}(d)$ to the hyperplane $H$ to bring it into the following form, while preserving the dot product since $g\in \mathbb{O}(d)$:
$$
gH=\left\{ (z_1,\dots,z_{d-1}, z_d ) \colon z_d= \frac{2(a_1^2+\cdots+a_{d-1}^2+a_d)}{\sqrt{4a_1^2 + \cdots + 4a_{d-1}^2 +1 }} \right\}.
$$

That is, the first $d-1$ coordinates are free, while the last coordinate represents the distance between the origin and $H$. Since $g$ is a rigid motion, the degenerate region remains an ellipsoidal hypersurface on the $(d-1)$-dimensional hyperplane $gH$. Therefore, as in the discussion of singularity, we only need to focus on the dot product in the first $(d-1)$ coordinates, as the last coordinate is now a constant. This allows us to lower the dimensional threshold.

Now, we work in $\mathbb{R}^{d-1}$ and again apply Theorem \ref{projection theorem} to analyze the dot product. Specifically, we must consider whether the origin $O \in \mathbb{R}^{d-1}$ lies inside, outside, or on the ellipsoidal hypersurface $gD$, as discussed in Subsection \ref{Singularity}. Recall that an ellipsoid can be transformed into a ball via a rescaling linear isomorphism. Hence, the arguments in Subsection \ref{Singularity} still apply to $gD$ with slight modifications. To conclude, all cases yield the same exponent $\frac{d-1}{2}$. Therefore, if $E$ is a compact subset of the degenerate region $D$ and $\dim_H E > \frac{d-1}{2}$, then there exists $x \in E$ such that $|\Pi^x(E)| > 0$.

\subsection{Proof of our main theorem for $d=3$}
The proof for the $d=3$ case is relatively straightforward because both the singularity and the degenerate region are at most $d-2=1$-dimensional, which is below the threshold $\frac{5}{4}$ derived from the pinned distance result.

We now proceed with the proof of the main theorem for $d=3$. Fix $a=(\bar{a},a_3)$. For any compact subset $E$ of a translated paraboloid $a+\mathcal{P}_3 = (\bar{a},a_3)+\mathcal{P}_3$ with Hausdorff dimension greater than $\frac{5}{4}$, we define
$$
\bar{E} = \{ \bar{x} \colon   (\bar{x}+\bar{a},|\bar{x}|^2+a_3) \in E \}.
$$
Observe that there is a one-to-one correspondence between $\bar{E} \subset \mathbb{R}^{2}$ and $E\subset a+\mathcal{P}_3 \subset \mathbb{R}^3$. Since both the singularity and the degenerate region are at most $1$-dimensional, they can be removed along with a small neighborhood surrounding them without losing too much dimension by the pigeonhole principle. More precisely, we can obtain a compact subset of $E$ that still has Hausdorff dimension greater than $\frac{5}{4}$ while being bounded away from the singular and degenerate regions. Thus, without loss of generality, we may assume $E$ is bounded away from these bad parts.

Fix $a=(\bar{a},a_3)$ and the compact set $E$ on $a+\mathcal{P}_3$ with dimension greater than $\frac{5}{4}$ and bounded away from the singular and degenerate regions. By the key observation in Section \ref{key observation}, for each $(\bar{x},|\bar{x}|^2)+a$, we have
$$
|\Pi^{(\bar{x},|\bar{x}|^2)+a} (E)|>0 \;\;\;\; \Longleftrightarrow \;\;\;\;  |\Delta^{-\frac{\bar{x}+\bar{a}}{2(|\bar{x}|^2+a_3)}}
(\bar{E})| = |\Delta^{T_a(\bar{x})}(\bar{E})| >0.
$$
Since $\bar{E}$ is bounded away from the singularity, $T_a(\bar{x})$ is well-defined for all $\bar{x}\in \bar{E}$, and since it is also bounded away from the degenerate region, Proposition \ref{Jacobian of transformation maps} and Lemma \ref{not lose dimension} ensure that $\dim_H (T_a(\bar{E}))=\dim_H (\bar{E}) =\dim_H E > \frac{5}{4}$.

The crucial step is to apply Theorem \ref{state-of-the-art pinned distance we use} for dimension $d-1=3-1=2$ to the sets $E_1=\bar{E}$ and $E_2=T_a(\bar{E})$. This guarantees the existence of some $\bar{y}=T_a(\bar{x}) \in T_a(\bar{E})$ where $\bar{x} \in \bar{E}$ such that
$$
|\Delta^{\bar{y}}(\bar{E})|=|\Delta^{T_a(\bar{x})}(\bar{E})| >0.
$$
Consequently, we obtain
$$
|\Pi^{(\bar{x},|\bar{x}|^2)+a} (E)|>0,
$$
which implies $|\Pi(E)|>0$, thereby completing the proof for the case $d=3$. \qed

\subsection{Proof of our main theorem for $d\geq 4$}

Let $a = (\bar{a}, a_d)$ be an arbitrary translation vector in $\mathbb{R}^d$, and let $E$ be a compact subset of $a + \mathcal{P}_d$ with $\dim_H E > \frac{d}{2} - \frac{1}{4} - \frac{1}{8d - 4}$. By a pigeonholing argument, we can reduce to three cases: 

\begin{itemize}
    \item $E$ is a compact subset of $S$ with $\dim_H E > \frac{d}{2} - \frac{1}{4} - \frac{1}{8d - 4}$,
    \item $E$ is a compact subset of $D$ with $\dim_H E > \frac{d}{2} - \frac{1}{4} - \frac{1}{8d - 4}$, or
    \item $E$ does not intersect either $S$ or $D$ and still satisfies $\dim_H E > \frac{d}{2} - \frac{1}{4} - \frac{1}{8d - 4}$.
\end{itemize}

If $E$ is a compact subset of $S$ with $\dim_H E > \frac{d}{2} - \frac{1}{4} - \frac{1}{8d - 4} > \frac{d - 1}{2}$, then by the discussion in Subsection \ref{Singularity}, there exists $x \in E$ such that $|\Pi^x(E)| > 0$.

If $E$ is a compact subset of $D$ with $\dim_H E > \frac{d}{2} - \frac{1}{4} - \frac{1}{8d - 4} > \frac{(d - 1)}{2}$, then by the discussion in Subsection \ref{Degenerate region}, there exists $x \in E$ such that $|\Pi^x(E)| > 0$.

Finally, suppose $E$ is a compact set that does not intersect $S$ or $D$. First its projection $\bar{E} \subset \mathbb{R}^{d-1}$ satisfies
$$
\dim_H \bar{E} = \dim_H E > \frac{d}{2} - \frac{1}{4} - \frac{1}{8d - 4} 
= \frac{(d - 1)}{2} + \frac{1}{4} - \frac{1}{8(d - 1) + 4}.
$$
Since $E$ does not intersect $S$, $T_a$ is well-defined on $\bar{E}$.
Moreover, since $E$ does not intersect $D$, the Jacobian determinant of $T_a$ does not vanish on $\bar{E}$ or on a small open neighborhood of it. By Proposition \ref{not lose dimension},
$$
\dim_H T_a(\bar{E}) = \dim_H \bar{E} > \frac{d}{2} - \frac{1}{4} - \frac{1}{8d - 4} 
= \frac{(d - 1)}{2} + \frac{1}{4} - \frac{1}{8(d - 1) + 4}.
$$

Now, apply Theorem \ref{state-of-the-art pinned distance we use} for dimension $d - 1 \geq 3$ to $\bar{E}$ and $T_a(\bar{E})$. We obtain some $\bar{y} = T_a(\bar{x}) \in T_a(\bar{E})$, where $\bar{x} \in \bar{E}$, such that
$$
|\Delta^{\bar{y}}(\bar{E})|=|\Delta^{T_a(\bar{x})}(\bar{E})| > 0.
$$
By the key observation \eqref{positive measure iff} in Subsection \ref{key observation},
$$
|\Delta^{-\frac{\bar{x} + \bar{a}}{2(|\bar{x}|^2 + a_d)}}(\bar{E})| > 0 
\;\;\;\; \implies \;\;\;\;
|\Pi^{(\bar{x}, |\bar{x}|^2) + a}(E)| > 0.
$$
The proof for $d \geq 4$ is complete. \qed

\section{Sharpness of our result}

\subsection{Why the case $d=2$ is not considered}\label{Not study d=2}
We do not study the case $d=2$ because the method we use would lead us to work with the pinned distance problem in $\mathbb{R}^{d-1} = \mathbb{R}^1$. However, there exists a counterexample of a compact set $E \subset \mathbb{R}^1$ with Hausdorff dimension 1, whose distance set has zero Lebesgue measure.

Moreover, we now show that there exists a compact subset $E \subset \mathcal{P}_2 = \{ (x,x^2) \colon x\in \mathbb{R} \}$ with $\dim_H E = 1$ but $|\Pi(E)|=0$. First, observe that for $(x,x^2),(y,y^2) \in \mathcal{P}_2$, where $x,y \in \mathbb{R}$,
$$
(x,x^2) \cdot (y,y^2) = xy + x^2 y^2 = \left(xy+\frac{1}{2}\right)^2 -\frac{1}{4}.
$$
Let $F$ be a compact subset of $\mathbb{R}$ with $\dim_H F = 1$ but $|\Pi(F)|=0$ (such an example indeed exists). We define $E$ as follows:
$$
E \coloneqq \{ (x,x^2) \colon x\in F \}.
$$
Note that $\dim_H E = \dim_H F =1$, and since $\Pi(F)$ has zero Lebesgue measure, so does $\Pi(E)$. Since $\dim_H \mathcal{P}_2 =1$, this is the largest possible threshold. However, this is still insufficient to guarantee that compact subsets of $\mathcal{P}_2$ have dot product sets of positive Lebesgue measure.

\begin{remark}
Although we can construct a counterexample on the standard parabola $\mathcal{P}_2$, we are uncertain whether counterexamples exist on other translated parabolas $(a_1,a_2) + \mathcal{P}_2$. By "counterexamples," we mean compact subsets of $(a_1,a_2)+\mathcal{P}_2$ with Hausdorff dimension 1 whose dot product set has zero Lebesgue measure.
\end{remark}

\subsection{A naïve counterexample on $\mathcal{P}_d$}
In this subsection, we construct an example of a compact subset of the standard paraboloid $\mathcal{P}_d$. When constructing counterexamples of sets that have large dimension but also possess a distance (resp. dot product) set of measure zero, there are very few available methods. The most classical approach is to consider lattice points and their approximations, as lattice points exhibit many repeated distances and dot products. Note that the first $d-1$ coordinates determine the last coordinate on $\mathcal{P}_d$. Therefore, it is natural to consider lattice points in $\mathbb{R}^{d-1}$ and project the set onto $\mathcal{P}_d \subset \mathbb{R}^d$. However, this approach only yields the exponent $\frac{d-1}{4}$ as we shall see below.

Fix $s\in (0,1/2)$. Let $\{q_k\}_{k=1}^\infty$ be a sequence that satisfies $q_{k+1}> q_k^k$. For each $k\in \mathbb{N}$, define
$$
E_{s,q_k} \coloneqq 
\left\{ (x_1,\dots,x_{d-1}) \in [0,1]^{d-1} \colon \exists n_i \in \mathbb{Z} \text{ such that} \left| x_i-\frac{n_i}{q_k}\right| \leq q_k^{-1/s},\, \forall 1\leq i \leq d-1 \right\}
$$
and set $E_s=\bigcap_{k\in \mathbb{N}} E_{s,q_k}$. Follow the argument in Theorem 8.15 in Falconer's book \cite{Falconer's_book}, one can verify that $\dim_H E= s(d-1)$. We define a compact subset $F_s$ of $\mathcal{P}_d$ by
$$
F_s\coloneqq \{ (\bar{x},|\bar{x}|^2) \colon \bar{x}\in E_s \}.
$$
Clearly, we have $\dim_H F_s=s(d-1)$. Now, by appropriately controlling $s$, we can ensure that $|\Pi(F_s)|=0$. The key question is: how large can $s$ be while still ensuring $|\Pi(F_s)|=0$?

Through direct calculations, for any two points $(\bar{x},|\bar{x}|^2)$ and $(\bar{y},|\bar{y}|^2) \in F_s$, where $\bar{x},\bar{y}\in E_s$, their dot product lies in a $C_1 q_k^{-1/s}$-neighborhood of a lattice point of the form $\frac{n}{q_k^4}$, where $C_1$ is independent of $k$, and $n \in \mathbb{Z}$ is within an admissible range since both $E_s$ and $F_s$ are bounded. This implies that $\Pi(F_s)$ is contained in a union of $\leq C_2 q_k^4$ intervals of length $ \leq C_1 q_k^{-1/s}$. Therefore, we conclude that when $s<1/4$, $\Pi(F_s)$ has measure zero. 

To summarize, for every $\varepsilon>0$, we can find a compact subset $F$ of $\mathcal{P}_d$ such that
$$
\dim_H F > \frac{d-1}{4} - \varepsilon , \text{ but } |\Pi(F)|=0.
$$

\begin{remark}
Compared with Proposition \ref{Dot product counterexample}, which provides a counterexample for dot products in $\mathbb{R}^d$, we can see that this construction yields a suboptimal exponent due to the square in the last coordinate, which arises from the algebraic formula of the paraboloid. Without the square in the last coordinate, we would only need to consider lattice points of the form $\frac{n}{q_k^2}$, where $n\in \mathbb{Z}$. Consequently, we would only require $\leq C_2 q_k^2$ intervals of length $\leq C_1 q_k^{-1/s}$ to cover the dot product set (where $C_2$ is also independent of $k$). This would suffice for $s<1/2$, leading to a better exponent.
\end{remark}

\subsection{Another counterexample on $\mathcal{P}_d$}
In this subsection, we assume $d\geq 5$ and use an alternative method to construct a compact subset $F \subset \mathcal{P}_d$ of dimension $\frac{d-3}{2} - \varepsilon$ whose dot product set has measure zero. Roughly speaking, we consider the set
$$
\{ (\bar{x},|\bar{x}|^2) \colon |\bar{x}|^2=1 \}
= \{ (\bar{x},1) \colon |\bar{x}|^2=1 \}.
$$
When analyzing the size of the dot product set in $\mathbb{R}^{d}$, we may ignore the influence of the last coordinate. Focusing on the first $d-1$ coordinates, we observe that this set forms a $(d-2)$-dimensional sphere centered at the origin in $\mathbb{R}^{d-1}$. Furthermore, the dot product on the sphere is, in a sense, equivalent to the distance, which is translation-invariant.

To proceed, we will make use of the following intersection theorem (which can be found in Chapter 7.2 of Mattila's book \cite{Fourier_analysis_and_Hausdorff_dimension}) to guarantee the existence of a compact subset whose dot product set (distance set) has measure zero.

\begin{theorem} \label{intersection theorem}
    Suppose $0<s<d,\, 0<t<d,\, s+t>d,$ and $t>(d+1)/2$. If $A,B$ are Borel subsets of $\mathbb{R}^d$ with $\mathcal{H}^s(A)>0$ and $\mathcal{H}^t(B)>0$, then for $\theta_d$-almost every $g\in \mathbb{O}(d)$,
    $$
    \mathcal{L}^d (\{ z\in \mathbb{R}^d \colon \dim_H A\cap (\tau_z \circ g)(B) \geq s+t-d \})>0,
    $$
    where $\theta_d$ is the Haar measure on $\mathbb{O}(d)$ and $\tau_z$ is the translation map by the vector $z\in \mathbb{R}^d$.
\end{theorem}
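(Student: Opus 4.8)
The plan is to run the classical energy–integral (``thickened slice'') argument, exactly as in Chapter 7 of \cite{Fourier_analysis_and_Hausdorff_dimension}. After replacing $A,B$ by compact subsets of positive Hausdorff measure, Frostman's lemma supplies probability measures $\mu$ on $A$ and $\nu$ on $B$ with finite $s'$-- and $t'$--energies for some $s'<s$, $t'<t$ still satisfying $s'+t'>d$ and $t'>\tfrac{d+1}{2}$; since it suffices to produce a nonzero measure of finite $(s'+t'-d)$--energy on the intersection, we will let $s'\uparrow s$, $t'\uparrow t$ only at the very end. Write $\nu_g$ for the pushforward of $\nu$ by $g\in\mathbb{O}(d)$ (so $\nu_g$ lives on $gB$), fix a smooth approximate identity $\psi_\varepsilon$, and for $z\in\mathbb{R}^d$ define the candidate intersection measure $\lambda^\varepsilon_{g,z}$ by $\diff\lambda^\varepsilon_{g,z}(x)=(\psi_\varepsilon\ast\nu_g)(x-z)\,\diff\mu(x)$. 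As $\varepsilon\to 0$ this localizes, tested against continuous functions, onto $A\cap(z+gB)=A\cap(\tau_z\circ g)(B)$, so any weak-$\ast$ subsequential limit with positive mass and finite $(s'+t'-d)$--energy does the job.

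Two Fubini computations drive the argument. First, integrating total mass, $\int_{\mathbb{O}(d)}\int_{\mathbb{R}^d}\|\lambda^\varepsilon_{g,z}\|\,\diff z\,\diff\theta_d(g)=\|\mu\|\,\|\nu\|$, a positive constant independent of $\varepsilon$. Second, and this is the heart, one integrates the energy: performing the $z$--integral converts the product $(\psi_\varepsilon\ast\nu_g)(x-z)\,(\psi_\varepsilon\ast\nu_g)(y-z)$ into a convolution depending only on $x-y$, and then averaging over $\mathbb{O}(d)$ replaces $|\widehat{\psi_\varepsilon\ast\nu_g}|^2$ by the spherical average $\Sigma_\nu(r)=\int_{\mathbb{S}^{d-1}}|\widehat{\nu}(r\omega)|^2\,\diff\omega$ (harmlessly damped by $|\widehat{\psi}(\varepsilon\xi)|^2\le 1$). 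The whole computation thus reduces to the estimate
$$
\int_{\mathbb{O}(d)}\int_{\mathbb{R}^d}I_{s'+t'-d}\bigl(\lambda^\varepsilon_{g,z}\bigr)\,\diff z\,\diff\theta_d(g)\;\lesssim\;I_{s'}(\mu)\;<\;\infty ,
$$
uniformly in $\varepsilon$: the Riesz kernel $|\cdot|^{-(s'+t'-d)}$ paired against the rotation--averaged autocorrelation of $\nu$ must be dominated by the $s'$--energy of $\mu$.

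I expect this last inequality to be the main obstacle, and it is where the hypothesis $t>\tfrac{d+1}{2}$ enters in an essential, sharp way. The decay one gets for $\Sigma_\nu$ from $I_{t'}(\nu)<\infty$ alone (through the spherical-average / Fourier-decay estimates for Frostman measures) improves on the trivial $L^2$ bound enough to close the computation only once $t'>\tfrac{d+1}{2}$; heuristically, the part of the rotation--averaged autocorrelation of $\nu$ carried by frequencies of size $\approx R$ behaves, via the classical stationary-phase decay $|\xi|^{-(d-1)/2}$ of the Fourier transform of the surface measure on $\mathbb{S}^{d-1}$, like $|w|^{-(d-1)/2}$ at distance $|w|$, and matching the gained exponent $(d-1)/2$ against the exponent $d-t$ that is actually needed forces $t\ge\tfrac{d+1}{2}$.

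Granting the estimate, finishing is routine. From the mass identity, the uniform energy bound, and the consequent uniform $L^2$ bound on $z\mapsto\|\lambda^\varepsilon_{g,z}\|$, a Chebyshev-type argument yields a set of pairs $(g,z)$ of $\theta_d\times\mathcal{L}^d$--measure bounded below uniformly in $\varepsilon$, on which simultaneously $\|\lambda^\varepsilon_{g,z}\|\gtrsim 1$ and $I_{s'+t'-d}(\lambda^\varepsilon_{g,z})\lesssim 1$ (here one uses $\|\lambda\|^2\lesssim I_{s'+t'-d}(\lambda)$, valid because the supports sit in a fixed ball and $s'+t'-d>0$). A weak-$\ast$ limit along $\varepsilon\to 0$ then gives, for $(g,z)$ in a positive-measure set, a nonzero measure on $A\cap(\tau_z\circ g)(B)$ of finite $(s'+t'-d)$--energy, hence $\dim_H\bigl(A\cap(\tau_z\circ g)(B)\bigr)\ge s'+t'-d$. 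A final application of Fubini in $(g,z)$ upgrades this to: for $\theta_d$--a.e.\ $g$, a set of $z$ of positive $\mathcal{L}^d$--measure has the property; letting $s'\uparrow s$, $t'\uparrow t$ along a countable sequence gives the stated bound $s+t-d$.
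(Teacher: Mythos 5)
The paper does not prove this theorem; it is quoted verbatim from Chapter~7.2 of Mattila's book \cite{Fourier_analysis_and_Hausdorff_dimension}, so there is no in-paper proof to compare against. Your sketch correctly reproduces the skeleton of Mattila's own argument: Frostman measures on compact subsets, the thickened intersection measure $\diff\lambda^\varepsilon_{g,z}(x)=(\psi_\varepsilon*\nu_g)(x-z)\,\diff\mu(x)$, Fubini in $z$ to produce the autocorrelation of $\nu_g$, the $\mathbb{O}(d)$-average that converts $|\widehat{\nu_g}|^2$ into the spherical average $\Sigma_\nu$, the mass identity, the Chebyshev/Paley--Zygmund selection of good $(g,z)$ (using $\|\lambda\|^2\lesssim I_\alpha(\lambda)$ on a fixed ball), and the weak-$\ast$ limit. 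You also correctly locate the place where $t>(d+1)/2$ is used: it is exactly the threshold at which the spherical-average decay obtainable from $I_{t}(\nu)<\infty$ (tied to the $|\xi|^{-(d-1)/2}$ decay of $\widehat{\sigma_{\mathbb{S}^{d-1}}}$) is strong enough to make the Riesz--energy integral converge.

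One step as written does not quite close. You propose to prove the energy bound with exponents $s'<s$, $t'<t$, obtaining for each pair $(s',t')$ a full-measure set of $g$ and, for each such $g$, a positive-measure set of $z$ with $\dim_H(A\cap(\tau_z\circ g)(B))\ge s'+t'-d$, and then ``let $s'\uparrow s$, $t'\uparrow t$ along a countable sequence.'' Intersecting the full-measure sets of $g$ is fine, but for a fixed good $g$ the positive-measure sets $Z_n^g=\{z:\dim_H\ge s_n'+t_n'-d\}$ are nested decreasing, and nothing in your argument prevents $\mathcal{L}^d(Z_n^g)\to 0$: the energy constant $I_{s_n'}(\mu)$ on the right of your key inequality can blow up as $s_n'\uparrow s$, which degrades the Chebyshev lower bound on $\mathcal{L}^d(Z_n^g)$. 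Mattila's proof sidesteps this by estimating $\int\!\!\int I_{s+t-d}(\lambda^\varepsilon_{g,z})\,\diff z\,\diff\theta_d(g)$ \emph{directly} at the endpoint exponent $\alpha=s+t-d$, using the Frostman ball bounds $\mu(B(x,r))\lesssim r^s$, $\nu(B(x,r))\lesssim r^t$ (not the possibly infinite energies $I_s(\mu),I_t(\nu)$), together with the spherical-average lemma; this yields a bound uniform in $\varepsilon$ with no parameter to send to a limit afterward. If you want your version to close, you must either replicate that endpoint estimate or give a quantitative argument that $\inf_n \mathcal{L}^d(Z_n^g)>0$, e.g.\ by keeping $t'=t_1'$ fixed and only sending $s'\uparrow s$ while controlling how the implied constants enter the Paley--Zygmund step. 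Also, your phrase ``a final application of Fubini in $(g,z)$ upgrades this to: for $\theta_d$-a.e.\ $g$'' is a little loose: the correct route is that finiteness of the double integral gives $\int I_\alpha(\lambda^\varepsilon_{g,z})\,\diff z<\infty$ for a.e.\ $g$, and one then runs the Chebyshev selection slice-by-slice in $z$ for each such $g$, using that the mass identity $\int\|\lambda^\varepsilon_{g,z}\|\,\diff z=\|\mu\|\|\nu\|$ already holds for \emph{every} $g$ without averaging.
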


\begin{remark}
The reason for assuming $d\geq 5$ is that we let $B=\mathbb{S}^{d-2}\subset \mathbb{R}^{d-1}$ and $t = d-2$. On the other hand, the theorem also requires that $t>((d-1)+1)/2$. Combining these two conditions, we obtain $d>4$, which is equivalent to $d\geq 5$ since $d\in \mathbb{N}$.
\end{remark}

Now, we construct the desired counterexample. For every sufficiently small $\varepsilon>0$, we can construct a compact subset $E$ of $\mathbb{R}^{d-1}$ such that $\dim_H E > \frac{d-1}{2}-\varepsilon$ and $|\Delta(E)|=0$. By setting the dimension as $d-1$, choosing $A=E,\, B=\mathbb{S}^{d-2},\, s=\frac{d-1}{2}-\varepsilon,$ and $t=d-2$ ($s+t>(d-1)$ is satisfied), we apply Theorem \ref{intersection theorem} and obtain that for $\theta_{d-1}$-almost every $g\in \mathbb{O}(d-1)$,
$$
\mathcal{L}^{d-1} \left( \left\{ z\in \mathbb{R}^{d-1} \colon \dim_H E\cap (\tau_z \circ g)(\mathbb{S}^{d-2}) \geq \frac{d-1}{2}-\varepsilon-1 \right\} \right)>0.
$$

Observe that $g \in \mathbb{O}(d-1)$ preserves the sphere and that the Hausdorff dimension is translation-invariant. Hence, we conclude that there exists some $z\in \mathbb{R}^{d-1}$ such that
$$
\dim_H (\tau_{-z}(E) \cap \mathbb{S}^{d-2}) \geq \frac{d-3}{2} - \varepsilon.
$$
Taking $\bar{F}= \tau_{-z}(E) \cap \mathbb{S}^{d-2}$, we see that $\bar{F}$ is a compact subset of $\mathbb{S}^{d-2}\subset \mathbb{R}^{d-1}$ with dimension at least $\frac{d-3}{2}-\varepsilon$, and since its distance set satisfies $|\Delta(\bar{F})| \leq |\Delta(\tau_{-z}(E))| = |\Delta(E)|=0$, it follows that $|\Pi(\bar{F})|=0$. Finally, defining 
$$
F\coloneqq \{ (\bar{x},1) \mid \bar{x}\in \bar{F} \subset \mathbb{S}^{d-2}\subset \mathbb{R}^{d-1} \},
$$
we obtain a compact subset of $\mathcal{P}_d$ with dimension at least $\frac{d-3}{2}-\varepsilon$ and a dot product set of measure zero.

\begin{remark}
In the previous subsection, we constructed a compact subset of $\mathcal{P}_d$ with dimension at least $\frac{d-1}{4}-\varepsilon$ and a dot product set of measure zero. In this subsection, we show that there ``exists" a compact subset of $\mathcal{P}_d$ with dimension at least $\frac{d-3}{2}-\varepsilon$ and a dot product set of measure zero. It appears that the latter provides a considerably larger exponent than the former for large $d$. However, it is important to contrast that the former is an explicit construction, whereas we only establish the ``existence" of the latter.
\end{remark}

\subsection{Counterexamples on translated paraboloids}
In the previous two subsections, we constructed counterexamples on the standard paraboloid $\mathcal{P}_d$. In this subsection, we assume $d\geq 4$ and consider translations of $\mathcal{P}_d$ and demonstrate the existence of a set $J\subset \mathbb{R}^d$ with $\mathcal{L}^d(J)>0$ such that for each $a\in J$, there exists a compact subset $F_a$ of $a+\mathcal{P}_d$ with dimension at least $\frac{d-2}{2}-\varepsilon$ and a dot product set of measure zero.

The key insight is the intersection theorem (Theorem \ref{intersection theorem}). Again, let $E$ be a compact subset of $\mathbb{R}^{d}$ such that $\dim_H E > \frac{d}{2}-\varepsilon$ and $|\Pi(E)|=0$. By setting $A=E,\,B=\mathcal{P}_d,\, s=\frac{d}{2}-\varepsilon,$ and $t= d-1$ (both the conditions $s+t>d$ and $t> (d+1)/2$ are satisfied) and by applying Theorem \ref{intersection theorem}, we conclude that for $\theta_d$-almost every $g\in \mathbb{O}(d)$,
$$
\mathcal{L}^d \left( \left\{  a\in \mathbb{R}^d \colon \dim_H (E\cap (\tau_a \circ g)(\mathcal{P}_d)) \geq \frac{d}{2}-\varepsilon-1 =\frac{d-2}{2}-\varepsilon  \right\} \right) >0.
$$

Fix any $g\in \mathbb{O}(d)$ satisfying the above condition. Note that $\tau_a \circ g = g \circ \tau_{g^{-1}a}$ and that $g\in \mathbb{O}(d)$ preserves the Lebesgue measure $\mathcal{L}^d$, which imply
$$
\mathcal{L}^d \left( \left\{  a\in \mathbb{R}^d \colon \dim_H (E\cap (g \circ \tau_a)(\mathcal{P}_d)) \geq \frac{d-2}{2}-\varepsilon  \right\} \right) >0.
$$

Define the set $J \subset \mathbb{R}^d$ by
$$
\begin{aligned}
J&\coloneqq \left\{  a\in \mathbb{R}^d \colon \dim_H (E\cap (g \circ \tau_a)(\mathcal{P}_d)) \geq \frac{d-2}{2}-\varepsilon  \right\}\\
&= \left\{  a\in \mathbb{R}^d \colon \dim_H (g^{-1}(E)\cap (a+ \mathcal{P}_d)) \geq \frac{d-2}{2}-\varepsilon  \right\}.
\end{aligned}
$$
By definition, $\mathcal{L}^d(J)>0$. Moreover, for each $a\in J$, define $F_a = g^{-1}(E) \cap (a+\mathcal{P}_d)$. By construction, $\dim_H F_a \geq \frac{d-2}{2}-\varepsilon$ and $|\Pi(F_a)| \leq |\Pi(g^{-1}(E))| = |\Pi(E)|=0$.

\begin{remark}
Although the intersection theorem guarantees the existence of some counterexamples, it is noteworthy that applying the theorem results in the loss of $1$ dimension in the exponent, corresponding to the codimension of a $(d-1)$-dimensional hypersurface. We believe this dimensional loss is excessive, suggesting that counterexamples with larger dimension might be achievable. Additionally, we cannot precisely localize the points $a$ for which $a+\mathcal{P}_d$ carries $(\frac{d-2}{2}-\varepsilon)$-dimensional counterexamples. However, we do know that such points exist in abundance, as the set of such points $a$ has positive measure.
\end{remark}

\section{Comparison with the discrete case and conjecture}

\subsection{Comparison with the discrete case}
In \cite{Discrete_product_sets}, Che-Jui Chang, Ali Mohammadi, Thang Pham, and Chun-Yen Shen proved that assuming the extension conjecture, for an admissible dimension $d$ and a prime power $q$, if $E$ is a subset of 
$$
\mathcal{P}_d = \{(x_1,\dots,x_d) \colon x_d = x_1^2 + \cdots + x_{d-1}^2\} \subset \mathbb{F}_q^d,
$$
with $|E| \gg q^{\frac{d}{2} - \frac{1}{2d}}$, then $|\Pi(E)| \gg q$. 

In our paper, we prove that for every $a \in \mathbb{R}^d$ and every compact set $E \subset a + \mathcal{P}_d$, if the Hausdorff dimension of $E$ is greater than $\frac{d}{2} - \frac{1}{4}$ for $d = 3$, or greater than $\frac{d}{2} - \frac{1}{4} - \frac{1}{8d - 4}$ for $d \geq 4$, then $|\Pi(E)| > 0$. The key ingredient in our proof are the pinned distance results in Euclidean space, which rely on the decoupling method and the restriction theory.

Regarding negative results, the authors in \cite{Discrete_product_sets} proved that for admissible $d,q$ and any $\varepsilon>0$, there exists a subset $E\subset \mathcal{P}_d$ such that $|E| \approx q^{\frac{d-1}{2}-\varepsilon}$ and $|\Pi(E)| \approx q^{1-\varepsilon} = o(q)$. In our paper, we show that for any $\varepsilon>0$, there exists a compact set $E\subset \mathcal{P}_d$ such that 
$$
\dim_H E \geq \max\left(\frac{d-3}{2}-\varepsilon , 1 \right)
$$
and $|\Pi(E)|=0$. Here, for $3\leq d \leq 5$, we can embed the counterexample in Subsection \ref{Not study d=2} into higher-dimensional paraboloids and derive the exponent $1$, while for $d\geq 6$, we have the $\left(\frac{d-3}{2}-\varepsilon\right)$-construction. The reason why the finite field version achieves a better bound is that there exist many nonzero vectors in $\mathbb{F}_q^d$ that are mutually orthogonal. For a more precise statement, one can refer to Lemma 5.1 in Derrick Hart, Alex Iosevich, Doowon Koh, and Misha Rudnev's paper \cite{Averages_over_hyperplanes}. In contrast, in Euclidean space, no nonzero vector is self-orthogonal.

\subsection{Conjecture}

The Falconer distance conjecture states that if $E \subset \mathbb{R}^d$ is a compact set with Hausdorff dimension greater than $\tfrac{d}{2}$, then its distance set $\Delta(E)$ has positive Lebesgue measure. An even stronger conjecture, the pinned distance conjecture, asserts that $\dim_H E > \tfrac{d}{2}$ guarantees the existence of some $x \in E$ such that $|\Delta^x(E)|>0$.

Recall from our previous discussion that both $S$ and $D$ yield the exponent $\tfrac{d-1}{2}$. Moreover, the conjectured sharp exponent for the pinned distance problem is $\tfrac{d}{2}$. When studying dot products on paraboloids, we apply pinned distance results in one lower dimension, so the relevant exponent becomes $\tfrac{d-1}{2}$. Therefore, if the pinned distance conjecture is true, then for every $a \in \mathbb{R}^d$ and every compact set $E \subset a+\mathcal{P}_d$ with $\dim_H E > \tfrac{d-1}{2}$, one has $|\Pi(E)|>0$.

Motivated by this observation and by the finite field counterexample obtained in \cite{Discrete_product_sets}, we conjecture the following: for every $a \in \mathbb{R}^d$ and every compact set $E \subset a+\mathcal{P}_d$ with $\dim_H E > \tfrac{d-1}{2}$, there exists $x \in E$ such that $|\Pi^x(E)|>0$.

\bibliographystyle{plain}
\bibliography{mybibliography}

\vspace{2em}

\noindent \textsc{Chun-Kai Tseng, National Center for Theoretical Sciences, Taiwan}\\
\textit{email:} \texttt{firework1153@gmail.com}

\end{document}